\newlength{\hatchspread}
\newlength{\hatchthickness}
\newlength{\hatchshift}
\newcommand{\hatchcolor}{}
\tikzset{hatchspread/.code={\setlength{\hatchspread}{#1}},
         hatchthickness/.code={\setlength{\hatchthickness}{#1}},
         hatchshift/.code={\setlength{\hatchshift}{#1}},% must be >= 0
         hatchcolor/.code={\renewcommand{\hatchcolor}{#1}}}
\tikzset{hatchspread=3pt,
         hatchthickness=0.4pt,
         hatchshift=1pt,% must be >= 0
         hatchcolor=black}
\numberwithin{equation}{section}
\newtheorem{theorem}{Theorem}[section]
\newtheorem{definition}[theorem]{Definition}
\newtheorem{lemma}[theorem]{Lemma}
\newtheorem{example}[theorem]{Example}
\newtheorem{proposition}[theorem]{Proposition}
\newtheorem{corollary}[theorem]{Corollary}
\newtheorem{remark}{Remark}[section]
\def\bbr{{\mathbb R}}
\begin{document}\title{\textbf{Attractors in  $k$-dimensional discrete systems of mixed monotonicity}}
\author[1, 2]{ Ziyad AlSharawi\thanks{Corresponding author: zsharawi@aus.edu. This work was done while the first author was on sabbatical leave from the American University of Sharjah.}}
\author[1]{ Jose S. C\'anovas}
\author[2]{ Sadok Kallel}

\affil[1]{\small Universidad Politécnica de Cartagena, Paseode Alfonso XIII 30203, Cartagena, Murcia, Spain}
\affil[2]{\small American University of Sharjah, P. O. Box 26666, University City, Sharjah, UAE}
%\author{Ziyad AlSharawi,\thanks{Corresponding author: zsharawi@aus.edu} Jose Canovas,  Sadok Kallel \\[2pt]
%\texttt{I Need to figure out how to write my affiliation}\\
% American University of Sharjah, P. O. Box 26666\\
%$  University City, Sharjah, UAE\\ }
%}
%\date{\today}
\maketitle

\begin{abstract}
We consider $k$-dimensional discrete-time systems of the form  $x_{n+1}=F(x_n,\ldots,x_{n-k+1})$ in which the map $F$ is continuous and monotonic in each one of its arguments. We define a partial order on $\mathbb{R}^{2k}_+$, compatible with the monotonicity of $F$, and then use it to embed the $k$-dimensional system into a $2k$-dimensional system that is monotonic with respect to this poset structure. An analogous construction is given for periodic systems.  Using the characteristics of the higher-dimensional monotonic system, global stability results are obtained for the original system. Our results apply to a large class of difference equations that are pertinent in a variety of contexts. As an application of the developed theory, we provide two examples that cover a wide class of difference equations, and in a subsequent paper, we provide additional applications of general interest.
\end{abstract}

\noindent {\bf AMS Subject Classification}: 39A30, 39A60, 37N25.\\
\noindent {\bf Keywords}: Embedding; global stability; local stability; periodic solutions; rational difference equations; Ricker model.

\section{ Introduction}
We begin by providing a motivational and fundamental example of a one-dimensional map that illustrates the notion we aim to generalize in this paper. Let $f$ be a continuous decreasing function that maps an interval $I\subset\bbr$ into itself. We write $f(\downarrow)$, with the arrow indicating the monotonicity. When $f$ has no cycles of period two, a sequence of iterates of $f$ (i.e., solutions of $x_{n+1}=f(x_n)$) spirals in and converges to a fixed point of $f$ as shown in Fig. \ref{Fig-illustration1}.
This figure depicts a fundamental principle, which encompasses the core concepts that we aim to generalize and transform into practical mechanisms.
\begin{figure}[htpb]
\begin{center}
\begin{tikzpicture}[line cap=round,line join=round,>=triangle 45,x=1.0cm,y=1.0cm,scale=0.6]
\draw[help lines,step=.5] (0,0) grid (10,10);
\draw[-triangle 45, line width=1.0pt,scale=1] (0,0) -- (10.0,0) node[below] {$x_n$};
\draw[line width=1.0pt,-triangle 45] (0,0) -- (-1.0,0);
\draw[-triangle 45, line width=1.0pt,scale=1] (0,0) -- (0,10.0) node[left] {$x_{n+1}$};
\draw[line width=1.0pt,-triangle 45] (0,0) -- (0.0,-1.0);
\draw[line width=0.8pt,color=green] (0,0) -- (10.0,10.0);
\draw[line width=1.2pt,domain=0:10.0,smooth,variable=\x,color=blue] plot ({\x},{9-3*ln(1+\x)});
\draw[line width=1.0pt,color=red,smooth] (0.5, 7.783604676)--(7.78,7.78)--(7.783604676,2.481339363)--(2.48,2.48)--(2.481339363, 5.257748718)--(5.26,5.26)--(5.257748718, 3.498538527)--(3.498,3.498)--(3.498538527, 4.488742284)--(4.49,4.49)--
(4.488742284, 3.891902589)--(3.9,3.9)--(3.891902589, 4.237256085)--(4.23,4.23)--(4.237256085, 4.032606861);
\end{tikzpicture}
\end{center}
    \caption{The ``Cobweb convergence'' of a sequence $x_{n+1}=f(x_n)$, with $f$ decreasing. The sequence is embedded along the diagonal in the form $(x_n,x_n)$. When $f$ has no cycles of length two, the converging spiral illustrates ``box convergence'', which we develop for higher-dimensional maps (see \S\ref{embeddingtheory}).}\label{Fig-illustration1}
\end{figure}
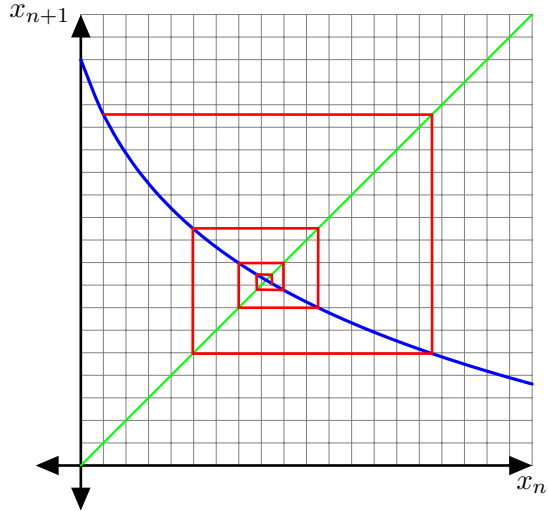

Define on $\bbr_+^2$ the so-called south-east partial ordering ($\leq_{\lambda}$)
\begin{equation}\label{southeast}
(x_1,x_2)\leq_{\lambda} (y_1,y_2)\ \Longleftrightarrow\ x_1\leq y_1\quad \text{and}\quad  y_2\leq x_2.
\end{equation}
Then define the two dimensional map $G: \bbr_+^2\to \bbr_+^2$  by
\begin{equation}\label{Eq-Gr1}
   G(x,y) = (f(y), f(x)).
\end{equation}

Because the function $f$ is decreasing, the function $G$ is non-decreasing with respect to $\leq_{\lambda},$ i.e. for $X,Y\in \bbr_+^2$,
$ X\leq_\lambda Y\ \Longrightarrow\ G(X)\leq_\lambda G(Y)$.
In other words, if $X=(x_1,y_1)$, $Y=(x_2,y_2)$, with $x_1\leq x_2$ and $y_1\geq y_2$, then $f(x_1)\geq f(x_2)$, and $f(y_1)\leq f(y_2)$.
Now, choose $a<b$, $P=(a,b)$ and $Q=P^t=(b,a)$ such that
$$P<_{\lambda}G(P)<_\lambda G(Q)<_\lambda Q$$
(note that $G(P)<_\lambda G(Q)$ is automatic since $P<_\tau Q$).
We call this \textit{a trapping box} (see \S \ref{boxconvergence}).
Pick $X_0=(x_0,x_0)$ in $[a,b]^2$ so that
$P\leq_{\lambda} X_0\leq_{\lambda} Q$.
 Let $x_0$ be an initial condition of $x_{n+1}=f(x_n)$, and iterate the map $G$ on the points $P,Q$ and $X_0=(x_0,x_0)$ to obtain
$$P\leq G^n(P)\leq G^n(x_0,x_0) = (x_n,x_n)\leq G^n(Q)\leq Q\ \ \ ,\ \ \ n\geq 1.$$
The nested red spiral path illustrates this in Fig. \ref{Fig-illustration1}. If $G$ has a unique fixed point and, both sequences $\{G^n(P)\}_{n\geq 0}$ and $\{G^n(Q)\}_{n\geq 0}$ are converging to the same point, it follows that the sequence $(x_n,x_n)$, and thus $x_n$, is converging as well.
\\

The notion of transitioning to a higher-dimensional system and employing partial ordering to ensure convergence, as seen above, is not new; for instance, see \cite{Go-Ha1994,Sm2006,Sm2008,Al2022} and the references therein. However, it is neither sufficiently developed nor optimally exploited. So we aim to remedy this deficiency and develop a general theory that applies to any difference equations of the form
\begin{equation}\label{Eq-GeneralCase1}
x_{n+1}=F(x_n,x_{n-1},\ldots,x_{n-k+1}),\; k\geq 1,\quad \text{where}\quad F: V\to \mathbb{R}_+.
\end{equation}
Also, $F$ is monotonic in each one of its components (increasing or decreasing) and $V$ can be any closed subset of $\bbr$ (in our case, we work with $V=\bbr, \mathbb{R}_+$ or $[a,b]$). Our approach hinges on associating to each monotonicity pattern $\uparrow_\tau$ of $F$ a partial ordering $\leq_\tau$ on $V^k$ as in \eqref{southeast}, and a suitable ``diagonal'' extension of \eqref{Eq-GeneralCase1} to $V^k\times V^k$, as in \eqref{Eq-Gr1}. This extended system is monotonic with respect to $\leq_\tau$, therefore enjoying convergence properties under mild conditions (existence of trapping boxes and uniqueness of fixed points). As usual, $X\leq_\tau Y$  means $X<_\tau Y$ or $X=Y.$ These ideas are generalizable to $p$-periodic difference equations
\begin{equation}\label{Eq-GeneralCase1Periodic}
x_{n+1}=F_n(x_n,x_{n-1},\ldots,x_{n-k+1}),\; k\geq 1,\quad \text{where}\quad F_n:\mathbb{R}_+^k\to \mathbb{R}_+.
\end{equation}
Note that the case $k=2$ is well-treated in the literature \cite{Sm2006,Sm2008,Al2022,Al-Ka2023}, and it was the motivation for this paper.
\\

The origin of the embedding approach and the idea of decomposing an operator into two monotonic components goes back to numerical analysis. This can be traced back to a paper by Schr\"{o}der \cite{Sc1959} and a book by Collatz \cite{Co1964}, see \cite{Go-Ha1994,Sm2006,Sm2008} and the references therein for a good account of history citation. The approach has been effectively utilized across different research areas \cite{Al2022,Al-Ka2023,El-He2021,Ku-Or2006}. The novelty in the embedding notion resides in defining a suitable partial order and an appropriate extension. A known partial order is to consider a convex closed cone $V,$ and define $X<_\lambda Y$ whenever $Y-X\in V.$ For instance, this is found to be effective when the map $F$ of Eq. \eqref{Eq-GeneralCase1} is non-decreasing in each one of its arguments \cite{Kr-Pi2004}. However, it fails to address the issue of mixed monotonicity.    In the subsequent sections, we extensively elaborate on the aforementioned concepts and provide results in the most inclusive manner feasible.
\\

The structure of this paper is as follows: In Section 2, we define a general partial order compatible with the monotonicity of the $k$-dimensional map $F,$ then embed the dynamical system into a higher dimensional dynamical system by constructing an extension of $F$ to a $2k$-dimensional map that is non-decreasing under the new partial order (Proposition \ref{keyembedding}). We give results on global stability that extend the scope of some previously established results for dimension two (Theorem \ref{nested} and Theorem \ref{Th-GlobalStability}). In Section 3, we extend the results of section two to periodic monotonic systems (Theorem \ref{Th-GlobalStability2}). In Section 4, we demonstrate the significance of our developed theory through some applications that span a wide class of difference equations.  In the last section, we give a conclusion that summarizes the main findings of this paper.

%%%%%%%%%%%%%%%%%%%%%%%%%%%%%%%%%%%%%%%%%%%%%%%5

\section{Embedding discrete-time systems}\label{embeddingtheory}

Let $(V,\leq)$ be a partially ordered metric space, in which $V$ can be  $\bbr_+$ or $[a,b].$
A recursive sequence in $V$, with delay $k$, is any sequence defined by
\begin{equation}\label{recursive}
\alpha_F : x_{n+1} = F(x_{n},\ldots, x_{n-k+1}), \ \ k\geq 1,
\end{equation}
where $F:V^{k}\rightarrow V$ is a continuous function and $X_0:=(x_0,x_{-1},\ldots, x_{-k+1})$ is a given initial condition in $V$.
We write $\mathcal S(V)$ to denote the set of all recursive sequences in $V$.  Different functions $F$ can give rise to the same recursive sequence \eqref{recursive}, so the pair $(F,X_0)$ does not uniquely determine the system in general; only the sequence does.
\\

An injection $\Psi: \mathcal S(V_1)\hookrightarrow\mathcal S (V_2)$ that sends convergent
sequences to convergent sequences is called an embedding.  In general, for an embedding $\Psi: \mathcal S(V_1)\rightarrow \mathcal S(V_2)$, the convergence of $\Psi (\alpha)$ does not imply the convergence of $\alpha$. When it does, we call it \textit{a strong} embedding.
Our approach now consists in finding a suitable embedding $\Psi : \mathcal S(V)\hookrightarrow\mathcal S(V^{n})$, for some $n$, so that the convergence of $\Psi (\alpha)$ can shed light on the convergence of $\alpha$.
We shall describe a specific construction to accomplish this objective by employing diagonal embeddings and partial orderings. We vastly expand on known ideas and state certain results as generally as possible.
\\

Rewrite the system \eqref{recursive}  in vector form as $\beta_T : X_{n+1} = T(X_n)$ and \begin{equation}\label{Eq-mapT}
T(X_n)= (F(X_n),x_{n},\ldots, x_{n-k+2})\ ,\ \hbox{where}\ X_n = (x_n,\ldots, x_{n-k+1})
\end{equation}
%\begin{equation}\label{mapT}
%T(X_n) = (F(x_n,\ldots, x_{n-k+1}),x_{n},\ldots, x_{n-k+2})
%\end{equation}
with an initial condition $X_0 := (x_0,x_{-1},\ldots, x_{-k+1})$.
Associating to $\alpha_F$ the sequence $\beta_T$ gives a strong embedding
$\mathcal S(V)\hookrightarrow\mathcal S(V^k)$, meaning again that $\beta_T$ converges if and only if $\alpha_F$ does.

\begin{definition}
    Let $T: V^k\rightarrow V^k$ be as in Eq. \eqref{Eq-mapT} and $r\geq 2$. A ``diagonal embedding'' of $T$ is a map $G: (V^k)^r\rightarrow (V^k)^r$ such that the restriction of $G$ to the diagonal is ${(T,\ldots, T)}$ ($r$-times).
\end{definition}

In this paper, we confine ourselves to $r=2$ and $G: V^k\times V^k\rightarrow V^k\times V^k$. Now, we build partial orderings on this product starting from a partial order $\leq$ on $V$. We write $(V,\leq)$ when we want to stress that $V$ is equipped with $\leq$, otherwise, we write $V$ ($\leq$ being implicitly understood).

\begin{definition}\label{Def-Poset}\rm (Monotonic orderings). Let $(V,\leq)$ be a partially ordered set (a poset). For any map $\tau : \{1,2,\ldots,k\}\rightarrow\{-1,1\}$, we define a partial ordering $\leq_\tau$ on $V^k$ by
$$(x_1,\ldots, x_k)\leq_\tau (y_1,\ldots, y_k)\
\Longleftrightarrow\ \forall i,\
\begin{cases} x_i\leq y_i\ ,&\hbox{if}\ \tau (i)=1\\
x_i\geq y_i\ ,&\hbox{if}\ \tau (i)=-1.
\end{cases}$$
If $\tau$ is a map as above, we write $\tau^\vee$ its dual defined by $\tau^\vee = -\tau$. The ``duality'' simply reflects the following fact: if $X,Y\in V^k$, then
$X\leq_\tau Y$ if and only if $Y\leq_{\tau^\vee}X$. We will write interchangeably $(V^k,\leq_\tau)$ or $(V^k,\tau)$ to denote the product $V^k$ endowed with the ordering $\leq_\tau$. Writing $X<_\tau Y$ means $X\leq_\tau Y,$ but $X\neq Y.$
\end{definition}

The simplest examples of a monotonic ordering are the
south-east ordering on $V^2$,  given by \eqref{southeast}, and its dual the north-west ordering. The south-east ordering corresponds to
$\tau : \{1,2\}\rightarrow \{-1,1\}$,
$\tau  (1)=1$ and $\tau (2)=-1$. As we explain shortly, this ordering is associated with maps of the form $F(\uparrow,\downarrow).$
Similarly, when $k=3$, the map $\tau (1)=1,\tau (2)=-1$ and $\tau (3) = 1$ corresponds to the ordering
$$(x,y,z)\leq_{\tau}(u,v,w)\ \Longleftrightarrow\ x\leq u\ ,\ y\geq v\ ,\ z\leq w.$$
This ordering is associated with maps of the form $F(\uparrow,\downarrow,\uparrow).$
\\

With the partial order $\tau$ on $V^k$, we can turn $V^k\times V^k$ into a poset with the south-east ordering $\lambda$ defined as in \eqref{southeast}, i.e.,
$$(X_1,U_1)\leq_{\lambda}(X_2,U_2)\ \Longleftrightarrow\ X_1\leq_{\tau}X_2\ \hbox{and}\ U_2\leq_{\tau} U_1.$$
This poset structure on $V^k\times V^k=V^{2k}$ is given by the poset product with ordering $\tau\times\tau^\vee$. For convenience, we may write $V^k\times_{\lambda}V^k$ to indicate the pair ($V^k\times V^k, \tau\times \tau^\vee)$. Based on our southeast order $\lambda,$ we can write $\lambda=\tau\times\tau^\vee$ in general.

\subsection{Trapping box}\label{boxconvergence}

The \textit{monotone convergence theorem} applies to $(V^k, \tau)$ when $V$ is a closed subspace of $\bbr$.
This implies some useful squeeze-type results that are fundamental when applying our technique.

\begin{lemma}\label{monotone} (Monotone Convergence)
Let $V$ be a closed subset of $\bbr$ (eg. $V=\bbr,\bbr^+$ or $V=[a,b]$), and consider on $V^k$ any monotone ordering $\tau$.  Suppose $G$ is increasing (resp. decreasing) with respect to the partial order $\tau$ and is bounded above (resp. below) in $\tau$. Define the sequence $\zeta_{G} : x_{n+1} = G(x_n)$, $n\geq 0,$ then $\zeta_G$ converges to a fixed point of $G$.
\end{lemma}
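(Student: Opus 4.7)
The plan is to reduce the $\tau$-monotone convergence statement on $V^k$ to $k$ independent applications of the classical one-dimensional monotone convergence theorem. The reason this should work is that $\leq_\tau$ is defined purely coordinate-wise by the sign pattern $\tau$, so monotonicity of the orbit with respect to $\leq_\tau$ is equivalent to each scalar coordinate sequence being monotone (in the direction prescribed by $\tau$) as a sequence in $\bbr$. Since $V$ is closed in $\bbr$, coordinate-wise convergence in $\bbr$ automatically yields convergence in $V^k$.

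First I would dispense with the ``decreasing'' variant by passing to the dual order: if $G$ is decreasing with respect to $\tau$ and bounded below in $\tau$, then $G$ is increasing with respect to $\tau^\vee = -\tau$ and bounded above in $\tau^\vee$ (using the duality pointed out after Definition \ref{Def-Poset}). So it suffices to treat the case where $G$ is $\tau$-increasing and the orbit is $\tau$-bounded above. Reading the hypothesis the only way that makes the statement non-vacuous, we assume $x_0 \leq_\tau G(x_0)$; monotonicity of $G$ then propagates this by induction to $x_n \leq_\tau x_{n+1}$ for every $n$. Writing $x_n = (x_n^{(1)}, \ldots, x_n^{(k)})$, the definition of $\leq_\tau$ says precisely that for each coordinate $i$ the scalar sequence $\{x_n^{(i)}\}_{n\geq 0}$ is non-decreasing if $\tau(i)=1$ and non-increasing if $\tau(i)=-1$. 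The hypothesis of boundedness above in $\tau$ translates coordinate-wise to each of these real sequences being bounded in its monotone direction, so the one-dimensional monotone convergence theorem furnishes limits $x_n^{(i)} \to \bar{x}^{(i)} \in \bbr$. Closedness of $V$ in $\bbr$ forces $\bar{x}^{(i)} \in V$, whence $x_n \to \bar{x} := (\bar{x}^{(1)}, \ldots, \bar{x}^{(k)}) \in V^k$. Passing to the limit $n \to \infty$ in the recursion $x_{n+1} = G(x_n)$ and invoking continuity of $G$ yields $\bar{x} = G(\bar{x})$, as desired.

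The only real obstacle is interpretive rather than technical: one must unpack what ``increasing with respect to $\tau$'' and ``bounded above in $\tau$'' actually mean at the level of coordinates, and recognize that an initial compatibility condition of the form $x_0 \leq_\tau G(x_0)$ has to be read into the statement for monotonicity of the orbit to propagate. Once these readings are fixed, the proof is essentially a clean, coordinate-by-coordinate citation of the real monotone convergence theorem followed by continuity of $G$, and nothing more delicate is needed.
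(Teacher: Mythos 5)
Your proof is correct and follows essentially the same route as the paper's: project onto each coordinate, note that $\leq_\tau$ makes each scalar sequence monotone and bounded, apply the one-dimensional monotone convergence theorem together with closedness of $V$, and use continuity of $G$ to identify the limit as a fixed point. Your explicit handling of the decreasing case via the dual order and your observation that the hypothesis must be read as the orbit being $\tau$-monotone (e.g.\ $x_0\leq_\tau G(x_0)$) are reasonable clarifications of the paper's terser wording, not a different argument.
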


\begin{proof} By projecting the sequence onto the $i$-th factor, we obtain a sequence $\zeta^i_G$ in $V$ that is monotonic and bounded. By the monotone sequence theorem valid for $V\subseteq\bbr$, this $i$-th sequence must converge to $y_i\in V$. It follows that $\zeta_G$ converges to $(y_1,\ldots, y_k)$, and this must be a fixed point of $G.$
\end{proof}

\begin{lemma}\label{squeeze} (Squeeze Lemma)
  Let $(V^k,\tau )$ as in Lemma \ref{monotone} and $\zeta_{G} : x_{n+1} = G(x_n)$, $n\geq 0$, with $x_0$ an initial point in $V^k$. Assume we have a ``trapping box'', that is there are two points $p, q$ in $V^k$ such that $p\leq_\tau x_0\leq_\tau q$, and $p\leq_\tau G(p)$, $G(q)\leq_\tau q$. \\
  (a) Suppose $G$ is increasing with respect to the partial order $\leq_\tau$. If $G$ has a unique fixed point, then the sequence $\zeta_{G}$ converges to that unique fixed point. \\
  (b) Suppose $G$ is decreasing with respect to $\leq_\tau$. If $G$ has no $2$-cycles, then the sequence $\zeta_{G}$ either converges to a fixed point of $G,$ or it is eventually bounded between two fixed points of $G.$
\end{lemma}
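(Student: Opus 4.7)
Both parts proceed by monotone iteration, with (b) reducing to (a) applied to $G^2$.

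For part (a), I would first verify that the order-interval $[p,q]_\tau$ is $G$-invariant. For any $x$ with $p\leq_\tau x\leq_\tau q$, the $\tau$-increasingness of $G$ gives $G(p)\leq_\tau G(x)\leq_\tau G(q)$, and combining with $p\leq_\tau G(p)$ and $G(q)\leq_\tau q$ yields $p\leq_\tau G(x)\leq_\tau q$. Hence $\{G^n(p)\}$ is $\tau$-nondecreasing and bounded above by $q$, so Lemma~\ref{monotone} gives $G^n(p)\to p^*\in\mathrm{Fix}(G)$; analogously $G^n(q)\to q^*$, and the uniqueness hypothesis forces $p^*=q^*$. Monotonicity propagates $p\leq_\tau x_0\leq_\tau q$ to $G^n(p)\leq_\tau G^n(x_0)\leq_\tau G^n(q)$, and since $\leq_\tau$ is a signed componentwise order on $V^k\subset\mathbb{R}^k$, the classical squeeze theorem applied coordinatewise gives $G^n(x_0)\to p^*$.

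For part (b), I would iterate $G^2$, which is $\tau$-increasing as a composition of two $\tau$-decreasing maps, and note that $\mathrm{Fix}(G^2)=\mathrm{Fix}(G)$ because $G$ has no $2$-cycles. The $G$-trap upgrades to a $G^2$-trap on $[p,q]_\tau$ once one has the full trapping-box inequalities $p\leq_\tau G(p)\leq_\tau G(q)\leq_\tau q$ (the form implicit in the discussion around Fig.~\ref{Fig-illustration1}): applying $G$ to $G(p)\leq_\tau q$ and to $p\leq_\tau G(q)$ gives $p\leq_\tau G(q)\leq_\tau G^2(p)$ and $G^2(q)\leq_\tau G(p)\leq_\tau q$. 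Lemma~\ref{monotone} applied to $G^2$ then produces $G^{2n}(p)\to A$ and $G^{2n}(q)\to B$ with $A,B\in\mathrm{Fix}(G)$ and $A\leq_\tau B$ (the latter from $G^{2n}$ increasing applied to $p\leq_\tau q$). Continuity of $G$ together with $G(A)=A$, $G(B)=B$ force $G^{2n+1}(p)\to A$ and $G^{2n+1}(q)\to B$. The orbit $G^n(x_0)$ is then sandwiched between these subsequences, with the squeeze direction alternating by parity because $G$ is $\tau$-decreasing; hence either $A=B$ and the orbit converges to the common fixed point, or it is eventually contained in the order-interval $[A,B]_\tau$ between two distinct fixed points of $G$.

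The main difficulty lies in part (b), namely the passage from a $G$-trap to a $G^2$-trap. For increasing $G$ the invariance of $[p,q]_\tau$ is automatic from the trap inequalities; for decreasing $G$ one additionally needs $G(p)\leq_\tau q$ and $p\leq_\tau G(q)$, which are part of the trapping-box convention adopted earlier. Once invariance is in force, the $G^2$-trap inequalities $p\leq_\tau G^2(p)$ and $G^2(q)\leq_\tau q$ follow from a two-step application of $G$, and the rest is a direct transposition of part (a)'s argument, with the parity analysis producing the dichotomy in the conclusion.
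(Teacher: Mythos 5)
Your part (a) is correct and is essentially the paper's argument: squeeze $G^n(x_0)$ between the monotone sequences $G^n(p)$ and $G^n(q)$, send both ends to the unique fixed point via Lemma \ref{monotone}, and conclude coordinatewise. For part (b) you reduce to the increasing map $G^2$ and invoke $\mathrm{Fix}(G^2)=\mathrm{Fix}(G)$, whereas the paper runs a direct parity induction producing four monotone subsequences $\{G^{2n}(p)\},\{G^{2n+1}(p)\},\{G^{2n}(q)\},\{G^{2n+1}(q)\}$; these are the same argument in different clothing. What your version buys is an explicit accounting of the hypotheses: you correctly observe that for a $\tau$-decreasing $G$ the stated conditions $p\leq_\tau G(p)$ and $G(q)\leq_\tau q$ do not by themselves make $[p,q]_\tau$ invariant, and that one additionally needs $G(p)\leq_\tau q$ and $p\leq_\tau G(q)$ to obtain the base comparisons $p\leq_\tau G^2(p)$ and $G^2(q)\leq_\tau q$ (without which $p$ and $G^2(p)$ could be incomparable and the even-indexed subsequence need not be monotone). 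The paper's proof silently assumes exactly this when it asserts the four subsequences are monotone and bounded, so your flagging of the extra hypotheses is a genuine improvement in rigor rather than a detour.

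One slip to fix: the ``full trapping-box'' chain you quote, $p\leq_\tau G(p)\leq_\tau G(q)\leq_\tau q$, is the convention for an \emph{increasing} map (as in the introduction, where $G(P)\leq_\lambda G(Q)$ follows from $P\leq_\lambda Q$). For a $\tau$-decreasing $G$, $p\leq_\tau q$ forces $G(q)\leq_\tau G(p)$, so your chain would collapse to $G(p)=G(q)$. The hypotheses you actually use downstream are the right ones, namely $G(p)\leq_\tau q$ and $p\leq_\tau G(q)$, i.e.\ the chain $p\leq_\tau G(q)\leq_\tau G(p)\leq_\tau q$; state it that way and the rest of your derivation of the $G^2$-trap, and the parity sandwich giving the dichotomy (convergence if $A=B$, eventual confinement to $[A,B]_\tau$ otherwise), goes through as you describe.
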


\begin{proof}
(a) If $G$ is increasing,  $p\leq_\tau G^n(p)\leq_\tau G^n(x_0)\leq_\tau G^n(q)\leq_\tau q$, $\forall n\geq 1$.
The sequence $\{G^n(p)\}_{n\geq 0}$ is increasing, and the sequence $\{G^n(q)\}_{n\geq 0}$ is decreasing, so both converge to the unique fixed point, which must also be the limit of $\zeta_{G} = \{G^n(x_0)\}_{n\geq 0}$. (b)
By induction, we obtain two bounded decreasing sequences, namely $\{G^{2n}(q)\}$ and $\{G^{2n+1}(p)\},$ and we obtain two bounded increasing sequences, namely $\{G^{2n}(p)\}$ and $\{G^{2n+1}(q)\}.$
Furthermore, the sequence $\{G^{n}(x_0)\}$ is squeezed between $\{G^n(p)\}$ and $\{G^n(q)\}.$ In particular,
$$G^{2n}(p)\leq_\tau G^{2n}(x_0)\leq_\tau G^{2n}(q)\quad \text{and}\quad G^{2n+1}(q)\leq_\tau G^{2n+1}(x_0)\leq_\tau G^{2n+1}(p). $$
Therefore, we must have  $\{G^n(p)\}\to \{\bar p_1,\bar p_2\}$ and $\{G^n(q)\}\to \{\bar q_1,\bar q_2\}.$
Since $G$ has no $2$-cycles, we must have $\bar p_1=\bar p_2=\bar p$ and $\bar q_1=\bar q_2=\bar q.$ Now, if $\bar p=\bar q,$ then $\zeta_G$ converges to $\bar p;$ otherwise, $\zeta_G$ is eventually bounded between $\bar p$ and $\bar q,$ i.e., $\bar p\leq_\tau \liminf \zeta_G\leq_\tau \limsup \zeta_G\leq_\tau \bar q.$
\end{proof}

We now arrive at the following main statement about the convergence of our original discrete system \eqref{recursive}. Recall that $x$ is a fixed point of $F$ if
$F(x,\ldots, x)=x$, and if an orbit of system \eqref{recursive} converges to $x$, then $x$ must be a fixed point of $F$.

\begin{theorem}\label{nested} (Nested Box Convergence)
Let $\alpha_F$ and $\beta_T$ as in Eqs. \eqref{recursive} and \eqref{Eq-mapT}. Suppose $G$ is a diagonal extension of $T$ to $V^k\times V^k$, and there is a partial ordering $\tau$ on $V^k$ so that $G$ is increasing on $V^k\times_\lambda V^k$. Assume there is $P,Q\in V^k\times_\lambda V^k$ so that $P\leq G(P), G(Q)\leq Q$. If $G$ has a unique fixed point in $[P,Q]$, then $G^n(Y_0)$ converges for all $Y_0=(X_0,X_0)$ that satisfy $P\leq Y_0\leq Q.$ In that case, \eqref{recursive}  converges to a fixed point of $F$.
\end{theorem}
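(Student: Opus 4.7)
The plan is to apply the Squeeze Lemma \ref{squeeze}(a) to $G$ on the poset $V^k\times_\lambda V^k$, then to transfer the conclusion back to $\alpha_F$ via the diagonal invariance of $G$ and the strong embedding $\alpha_F\mapsto\beta_T$. The three ingredients of Lemma \ref{squeeze}(a) are exactly the hypotheses of the theorem: $G$ is $\leq_\lambda$-increasing, $[P,Q]$ is a trapping box with $P\leq_\lambda Y_0\leq_\lambda Q$, and $G$ has a unique fixed point in $[P,Q]$.

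The key preliminary observation is that the diagonal $\Delta=\{(X,X):X\in V^k\}\subset V^k\times V^k$ is $G$-invariant: since by definition the restriction of $G$ to $\Delta$ is $(T,T)$, we have $G(X,X)=(T(X),T(X))$, and hence by induction $G^n(Y_0)=(T^n(X_0),T^n(X_0))$ for every $Y_0=(X_0,X_0)$. Thus any orbit of $G$ starting on $\Delta$ remains on $\Delta$, and its first (equivalently, second) coordinate tracks the $T$-orbit of $X_0$.

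Next I would unfold the proof of Lemma \ref{squeeze}(a) in this setting: the iterates $\{G^n(P)\}$ form a $\leq_\lambda$-increasing sequence bounded above by $Q$, while $\{G^n(Q)\}$ form a $\leq_\lambda$-decreasing sequence bounded below by $P$; Lemma \ref{monotone} then gives limits $\bar P,\bar Q\in[P,Q]$, which are fixed points of $G$ by continuity. The assumed uniqueness of a fixed point of $G$ in $[P,Q]$ forces $\bar P=\bar Q=:\bar Y$, and the monotone squeeze
\[
G^n(P)\leq_\lambda G^n(Y_0)\leq_\lambda G^n(Q)
\]
shows $G^n(Y_0)\to\bar Y$ as $n\to\infty$.

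Finally, I would close the loop with the original system. Because $G^n(Y_0)\in\Delta$ and $\Delta$ is closed, the limit $\bar Y$ lies on $\Delta$, say $\bar Y=(\bar X,\bar X)$, and therefore $T^n(X_0)\to\bar X$. Since $\beta_T$ is a strong embedding of $\alpha_F$, the original sequence $\alpha_F$ converges; writing out the fixed point equation $T(\bar X)=\bar X$ coordinatewise forces $\bar X=(\bar x,\ldots,\bar x)$ with $F(\bar x,\ldots,\bar x)=\bar x$, so the limit of $\alpha_F$ is indeed a fixed point of $F$. The only point requiring a little care is the passage from ``unique fixed point of $G$ in $[P,Q]$'' to the conclusion of Lemma \ref{squeeze}(a), whose statement quantifies over all fixed points of $G$; but confining the argument to $[P,Q]$ (where all the relevant limit points automatically lie by monotonicity) makes this harmless, and this is the only genuine subtlety in the proof.
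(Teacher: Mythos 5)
Your proposal is correct and follows essentially the same route as the paper: apply Lemma \ref{squeeze}(a) to the sequence $\zeta_G=\{G^n(X_0,X_0)\}_{n\geq 0}$ in $(V^k\times V^k,\tau\times\tau^\vee)$, use the diagonal invariance of $G$ so that $G^n(Y_0)=(T^n(X_0),T^n(X_0))$, and transfer convergence back through the strong embedding $\alpha_F\mapsto\beta_T$. Your extra care in reconciling the lemma's global uniqueness hypothesis with the theorem's uniqueness only in $[P,Q]$ (by noting that all relevant limit points stay in $[P,Q]$ by monotonicity) addresses a detail the paper's one-line proof glosses over, and your resolution is right.
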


\begin{proof} This is a direct application of Part (a) in Lemma \ref{squeeze}  to the sequence
$\zeta_G=\{G^n(X_0,X_0)\})_{n\geq 0}$ in $(V^k\times V^k, \tau\times\tau^\vee)$. For the initial value $Y_0 = (X_0,X_0)$, the sequence $G^n(Y_0)$ corresponds to the sequence $\beta_T\times\beta_T$. This sequence converges iff $\beta_T$ converges iff $\alpha_F$ converges.
\end{proof}

%%%%%%%%%%%%%%%%%%%%%%%%%%%%%%%%%%%
\subsection{Monotonic sequences under the new partial orders}
%%%%%%%%%%%%%%%%%%%%%%%%%%%%%%%%%%%%

To achieve convergence for the given $\alpha_F$ with delay $k$, the game plan is to construct a partial order $\tau$ and an extension $G_\tau$ as described in Theorem \ref{nested}. This will lead to creating an appropriate trapping box for the discrete dynamical system mentioned in Eq. \eqref{recursive}.
Let $(V,\leq)$ and $(W,\preceq)$ be two partially ordered sets, and let $f: V\rightarrow W$ be a function. We say that $f$ is increasing with respect to the given partial orders if
$x\leq y\ \Longrightarrow\ f(x)\preceq f(y)$.
In particular, for $(V,\leq$), when $\tau$ is a chosen partial ordering on $V^k$, then $F: V^k\rightarrow V$ is an increasing function with respect to $\tau$ if for $X,Y\in V^k,$
\begin{equation}\label{incF}
X\leq_\tau Y\ \Longrightarrow\
F(X)\leq F(Y).
\end{equation}
Such a function is denoted by $F(\uparrow_\tau)$. If $\tau$ is the south-east order, that is, $\tau (1)=1$ and $\tau (2)=-1$, then $F(\uparrow_\tau),$ which is denoted $F(\uparrow,\downarrow)$ in the literature \cite{Al-Al-Ka2020, Al2022}.
\\

Given a monotone function $F: V^k\rightarrow V$ as above and its ``vector-form'' $T$ as given in Eq. \eqref{Eq-mapT}, we define a higher-dimensional system (and call it an embedding of the original system) as follows:

\begin{definition}\label{Def-Gtau} (Diagonal Embedding). \rm
Assume $F(\uparrow_\tau)$ on $(V^k,\tau)$, and let $X=(x_1,x_2,\ldots, x_k), U=(u_1,u_2,\ldots, u_k)\in V^k.$ Define
$G_\tau\; : V^k\times V^k\to V^k\times V^k$  as follows:
\begin{enumerate}
    \item If $\tau (1)=1$, then
$G_\tau (X,U) =
(F(X), y_2,\ldots, y_{k}, F(U), z_2,\ldots z_{k}),$
$$\hbox{where for $1\leq i\leq k-1$}\ , \ \begin{cases} y_{i+1}=x_i\ \hbox{and}\ z_{i+1}=u_i&\hbox{if}\ \tau(i)\tau (i+1)=1\\
y_{i+1}=u_i\ \hbox{and}\ z_{i+1}=x_i&\hbox{if}\ \tau(i)\tau (i+1)=-1.
\end{cases}$$
\item If $\tau (1)=-1$, then
$G_\tau (X,U) =
(F(U), y_2,\ldots, y_{k}, F(X), z_2,\ldots z_{k}),$
where the entries $2$ to $k$ are chosen similarly as in the first case.
In other words, one switches $x$'s and $u$'s when the function at that coordinate $i$ changes monotonicity.
\end{enumerate}
\end{definition}

\begin{remark}\rm It is an immediate observation that the diagonal of $V^{2k}$ is invariant under $G_\tau$, for any choice of $\tau$. In particular, $G_\tau$ is a  diagonal extension of $T$ in the sense that
$$G_\tau (x_1,x_2,\ldots, x_k, x_1,x_2,\ldots, x_k) =
(T(x_1,x_2,\ldots, x_k), T(x_1,x_2,\ldots, x_k)).$$
\end{remark}

The main reason for introducing the diagonal extension $G_\tau$ is the following proposition, which shows that $G_\tau$ is a suitable choice of extension to which Theorem \ref{nested} applies perfectly well.

\begin{proposition}\label{keyembedding} If $F(\uparrow_\tau)$, then
$G_\tau$ is increasing with respect to the $\lambda=\tau\times\tau^\vee$ partial ordering.
\end{proposition}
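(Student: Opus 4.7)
The plan is to verify directly from the definitions that, given $(X_1,U_1)\leq_\lambda (X_2,U_2)$, i.e., $X_1\leq_\tau X_2$ and $U_2\leq_\tau U_1$, the image satisfies $G_\tau(X_1,U_1)\leq_\lambda G_\tau(X_2,U_2)$. Since $\lambda=\tau\times\tau^\vee$, this reduces to checking that the first $V^k$-block of $G_\tau$ respects $\leq_\tau$ while the second $V^k$-block reverses it. I will focus on the case $\tau(1)=1$; the case $\tau(1)=-1$ is handled identically after swapping the roles of $X$ and $U$ in the first entry of each block.

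For the first block, position $1$ contains $F(X_i)$ and the required direction is $\tau(1)=1$, so $F(X_1)\leq F(X_2)$ follows immediately from the hypothesis $F(\uparrow_\tau)$ applied to $X_1\leq_\tau X_2$. For each position $j$ with $2\leq j\leq k$, Definition \ref{Def-Gtau} selects $y_j=x_{j-1}$ when $\tau(j-1)\tau(j)=1$ and $y_j=u_{j-1}$ when $\tau(j-1)\tau(j)=-1$; the key point is that this rule is tuned precisely so that the direction $\tau(j)$ required at position $j$ is delivered by the hypothesis at source index $j-1$. Indeed, if the signs agree then the $\tau(j)$-comparison of the two $y_j$'s coincides with the $\tau(j-1)$-comparison guaranteed by $X_1\leq_\tau X_2$; if the signs disagree, the required direction is $-\tau(j-1)$ and the $(-\tau(j-1))$-comparison of the two $u_{j-1}$'s is exactly what $U_2\leq_\tau U_1$ supplies, the two sign flips cancelling.

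The second block is handled symmetrically: at position $1$, the $\tau^\vee$-requirement reads $F(U_2)\leq F(U_1)$, which follows from $F(\uparrow_\tau)$ and $U_2\leq_\tau U_1$; at positions $2,\ldots,k$ the $z_j$'s are chosen with the $x/u$ roles swapped relative to the $y_j$'s, which is precisely the flip that makes the $\tau^\vee$-comparison at each coordinate fall out of the correct hypothesis by the same cancellation argument. Assembling the two blocks gives the claimed inequality on $V^k\times_\lambda V^k$.

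I expect the main obstacle to be purely notational: keeping straight the three independent sign data at each step, namely the output direction $\tau(j)$, the source direction $\tau(j-1)$, and the block-level $\tau^\vee$ flip. Organising the verification as a compact $2\times 2$ case analysis on the signs of $\tau(j-1)$ and $\tau(j)$ exposes the cancellation cleanly, and no analytic input beyond the monotonicity of $F$ is needed.
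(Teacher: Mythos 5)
Your proposal is correct and follows essentially the same route as the paper's proof: a coordinate-by-coordinate verification splitting into the two blocks, using $F(\uparrow_\tau)$ for the first entry of each block and a sign case analysis on $\tau(j-1)\tau(j)$ for the shifted entries, with the second block handled by symmetry. The only difference is organizational (you index the cases by the product $\tau(j-1)\tau(j)$ rather than by $\tau(j)$ first), which does not change the substance.
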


Before giving the proof, we illustrate and further clarify the construction of $G_\tau$.

\begin{example}\rm Below are examples of the map $F(\uparrow_\tau)$ and its associated extension $G_\tau.$
\begin{description}
    \item{(i)} Consider $F(\uparrow,\downarrow).$ Then $\tau : \{1,2\}\to \{-1,1\}$ is such that $\tau (1)=1$ and $ \tau(2)=-1$ (the southeast ordering again), and
    $$G_\tau (x_1,x_2,u_1,u_2) = (F(x_1,x_2),u_1, F(u_1,u_2), x_1).$$
    Notice the interchange between $u_1$ and $x_1$ because $\tau(1)\tau(2)=-1$.
 \item{(ii)} Consider $F(\uparrow,\downarrow,\uparrow).$ Then $\tau : \{1,2,3\}\to \{-1,1\}$ is such that $\tau (1)=1,\tau(2)=-1$ and $ \tau(3)=1$, and
    $$G_\tau (x_1,x_2,x_3,u_1,u_2,u_3) = (F(x_1,x_2, x_3),u_1,u_2, F(u_1,u_2,u_3), x_1,x_2).$$
    We switched entries between the block of $x$'s and the block of $u$'s twice because $\tau$ changed monotonicity twice (at those entries).
\item{(iii)} Consider $F(\downarrow,\downarrow,\uparrow).$ Define $\tau : \{1,2,3\}\to \{-1,1\}$ such that $\tau (1)=-1,\tau(2)=-1$ and $ \tau(3)=1.$   Then
    $$G_\tau (x_1,x_2,x_3,u_1,u_2,u_3) = (F(u_1,u_2,u_3),x_1, u_2, F(x_1,x_2, x_3), u_1,x_2).$$
\item{(iv)}  Consider $F(\uparrow,\uparrow,\downarrow).$ Define $\tau : \{1,2,3\}\to \{-1,1\}$ such that $\tau (1)=1,\tau(2)=1$ and $ \tau(3)=-1.$   Then
    $$G_\tau (x_1,x_2,x_3,u_1,u_2,u_3) = (F(x_1,x_2,x_3),x_1, u_2, F(u_1,u_2, u_3), u_1,x_2).$$
\end{description}
\end{example}

\vskip 5pt
\begin{proof} (of Proposition \ref{keyembedding}).
Let $X, U, Y, V\in V^k$, and assume $(X,U)\leq_{\lambda }(Y,V)$. The claim we must establish is that
$G(X,U)\leq_{\lambda } G(Y,V)$.
We write $X=(x_1,\ldots, x_k)$, $Y=(y_1,\ldots, y_k)$, $U=(u_1,\ldots, u_k)$ and $V = (v_1,\ldots, v_k)$. We start by unraveling the first inequality: $(X,U)\leq_{\lambda}(Y,V)$ means that
$X\leq_\tau Y$ and $V\leq_\tau U$, and thus more explicitly
\begin{equation}\label{signs}
\begin{cases} x_i\leq y_i\ \ \hbox{and}\ \ v_i\leq u_i,&\ \hbox{if}\ \tau (i)=1\\
x_i\geq y_i\ \ \hbox{and}\ v_i\geq u_i,& \ \hbox{if}\ \ \tau (i)=-1.
\end{cases}
\end{equation}
Notice that $x_i\leq y_i\ \Longleftrightarrow\ \ u_i\geq v_i$. What we need to show is that $G(X,U)\leq_\lambda G(Y,V).$ Based on Definition \ref{Def-Gtau}, there are two cases to consider.

\noindent\textbf{Case one:} $\tau(1)=1$.
In that case, we write
\begin{eqnarray}\label{blocks}
G(X,U) &=& (F(x_1,\ldots, x_k), a_2,\ldots, a_k, F(u_1,\ldots, u_k), a'_2,\ldots, a'_k)\nonumber\\
G(Y,V) &=& (F(y_1,\ldots, y_k), b_2,\ldots, b_k, F(v_1,\ldots, v_k), b'_2,\ldots, b'_k)
\end{eqnarray}
with entries as specified in Definition \ref{Def-Gtau}.
The inequality $G(X,U)\leq_\lambda G(Y,V)$ breaks down into two inequalities that we must check. We start with the first
\begin{eqnarray}\label{firstblock}
(F(x_1,\ldots, x_k), a_2,\ldots, a_k)&\leq_\tau&
(F(y_1,\ldots, y_k), b_2,\ldots, b_k).
\end{eqnarray}
Since $X\leq_\tau Y$, then
$F(X)\leq F(Y)$ by \eqref{incF} and this is compatible with $\tau (1)=1$.  We have to verify next that $a_i\leq b_i$ if $\tau (i)=1$ and $a_i\geq b_i$ if $\tau (i)=-1$. We know that these entries are of the form
$$(i)\ \ (a_i, b_i) = (x_{i-1}, y_{i-1})
\ \ \hbox{or}\ \
(ii)\ \ (a_i, b_i) = (u_{i-1}, v_{i-1}).
$$
depending on the alternation in the sign of $\tau$.
\begin{description}
\item{Subcase 1}: Suppose $\tau (i)=1$. Then either $(a_i, b_i) = (x_{i-1}, y_{i-1})$ when
$\tau (i-1)=1$ (by the definition of $G$), in which case $a_i\leq b_i$
since $x_{i-1}\leq y_{i-1}$,
or $\tau (i-1)=-1$ and $(a_i, b_i) = (u_{i-1}, v_{i-1})$, in which case also $a_i\leq b_i$ since
$x_{i-1}\geq y_{i-1}$ and thus $u_{i-1}\leq v_{i-1}$ according to \eqref{signs}.

\item{Subcase 2}: Suppose $\tau (i)=-1$, then also from the construction of $G$, we must make sure that $a_i\geq b_i$. Here too, two cases can occur : if $\tau (i-1)=-1$, then  $(a_i, b_i) = (x_{i-1}, y_{i-1})$ and   $x_{i-1}\geq y_{i-1}$ as desired, or (ii) $\tau (i-1)=1$,
and $(a_i, b_i) = (u_{i-1}, v_{i-1})$,
in which case $u_{i-1}\geq v_{i-1}$ according to \eqref{signs}, and so $a_i\geq b_i$ as well.
\end{description}
Hence, we established the inequality in \eqref{firstblock}. We need to show next that for the second block in \eqref{blocks}, the ordering is also satisfied, i.e.
\begin{equation}\label{secondblock}
(F(v_1,\ldots, v_k), b'_2,\ldots, b'_k)\leq_\tau (F(u_1,\ldots, u_k), a'_2,\ldots, a'_k).
\end{equation}
Here, $\tau(1)=1$ as we set out from the beginning, so we need to make sure that $F(U)\geq F(V)$. But this is ensured by \eqref{incF} since $U\geq V$. Checking the other entries is similar and thus omitted.

\noindent\textbf{Case two:} $\tau(1)=-1$. This case proceeds in the same manner as case one.
\end{proof}
%%%%%%%%%%%%%%%%%%%%%%%%%%%%%%%%%%%%%%%%%%%%%%%%%%%%
\subsection{Fixed points and global attractors}
%%%%%%%%%%%%%%%%%%%%%%%%%%%%%55

We clarify the fixed points of $G$ and their relationship with $F.$
As before, $F: V^k\rightarrow V$ is increasing with respect to  $\tau : \{1,2\ldots,k\}\rightarrow \{-1,1\}$ as in Definition \ref{Def-Poset}, and
$\alpha_F: x_{n+1}=F(x_n,\ldots, x_{n-k+1})$.
Define the point (depending on $x,y$)
\begin{equation}\label{Eq-Ptau}
P_\tau = (x_1,\ldots, x_k) \quad \text{such that}\quad x_i = \begin{cases} x,&\ \hbox{if}\ \tau (i)=1\\
y,&\ \hbox{if}\ \tau (i)=-1,
\end{cases}
\end{equation}
and define $P^t_\tau$ its ``dual'' obtained by replacing $x$'s by $y$'s. Notice that, by construction, if $x < y$ then $P_\tau <_\tau P_\tau^t$.

\begin{lemma}\label{Eq-FixedPointsInGeneral}
The fixed points of $G_\tau$ are the points $P_\tau\times P_\tau^t$ satisfying
\begin{equation}
\begin{cases} F(P_\tau) = x\ ,\ F(P^t_\tau) =y\ ,\ \hbox{if}\ \tau (1)=1\\
F(P_\tau) = y\ ,\ F(P^t_\tau) =x\ ,\ \hbox{if}\ \tau (1)=-1.
\end{cases}
\end{equation}
\end{lemma}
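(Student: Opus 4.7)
The plan is to directly solve the fixed-point equation $G_\tau(X,U) = (X,U)$ coordinate by coordinate using Definition \ref{Def-Gtau}, extract two values $x,y$ from the first entries $x_1,u_1$, and show inductively that all remaining coordinates of $X$ and $U$ must equal either $x$ or $y$, with the pattern forced by $\tau$. The conditions on $F$ then drop out of the two first-coordinate equations.

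I would start with the case $\tau(1) = 1$. Writing $(X,U) = (x_1,\dots,x_k,u_1,\dots,u_k)$, the fixed-point equation becomes $F(X) = x_1$, $F(U) = u_1$, together with the transfer rules
$$x_{i+1} = \begin{cases} x_i, & \tau(i)\tau(i+1) = 1, \\ u_i, & \tau(i)\tau(i+1) = -1, \end{cases} \qquad u_{i+1} = \begin{cases} u_i, & \tau(i)\tau(i+1) = 1, \\ x_i, & \tau(i)\tau(i+1) = -1, \end{cases}$$
for $1 \le i \le k-1$. Setting $x := x_1$ and $y := u_1$, an induction on $i$ using the telescoping identity $\prod_{j=1}^{i-1}\tau(j)\tau(j+1) = \tau(1)\tau(i)$ shows that the chain swaps between the $x$- and $u$-values precisely when $\tau$ changes sign; hence $x_i = x$ iff $\tau(i) = \tau(1) = 1$, and $x_i = y$ otherwise. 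The analogous statement for $u_i$ has $x$ and $y$ exchanged. Comparing with \eqref{Eq-Ptau} gives $X = P_\tau$ and $U = P_\tau^t$. Substituting back into $F(X) = x_1$ and $F(U) = u_1$ yields the stated $F$-equations for this case.

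The case $\tau(1) = -1$ is entirely parallel; the only change is that now the first coordinate of the first block of $G_\tau(X,U)$ equals $F(U)$ while the first coordinate of the second block equals $F(X)$, as prescribed by Definition \ref{Def-Gtau}(2). The same induction on the transfer rules (with $x,y$ labeled so that $X = P_\tau$ and $U = P_\tau^t$ agree with \eqref{Eq-Ptau}) then produces the mirror version of the $F$-conditions. The converse direction, namely that any point of the form $P_\tau \times P_\tau^t$ whose associated $(x,y)$ satisfy the stated equations is genuinely fixed by $G_\tau$, is a routine verification: substitute into $G_\tau$ and read off coordinate-by-coordinate that every entry is reproduced.

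The main obstacle is purely bookkeeping: one must track how many times the $x$-chain and the $u$-chain have swapped by the time one reaches index $i$, in both blocks of $G_\tau$ simultaneously. The telescoping identity for $\prod \tau(j)\tau(j+1)$ sidesteps this cleanly, so one never has to enumerate individual sign changes, and the two cases $\tau(1) = \pm 1$ become notationally symmetric.
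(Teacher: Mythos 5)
Your strategy is exactly the paper's: solve $G_\tau(X,U)=(X,U)$ coordinate by coordinate, set $x=x_1$, $y=u_1$, and propagate these two values through the remaining entries via the transfer rules. The telescoping identity $\prod_{j=1}^{i-1}\tau(j)\tau(j+1)=\tau(1)\tau(i)$ is a genuine improvement in rigor over the paper's informal claim that the fixed-point equation ``will produce two complementary sequences,'' and your treatment of the case $\tau(1)=1$, together with the converse verification, is complete and correct.

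The gap is the assertion that the case $\tau(1)=-1$ is ``entirely parallel'' and ``produces the mirror version of the $F$-conditions.'' If you actually run your own argument there, it does not. With $\tau(1)=-1$ the fixed-point equation gives $x_1=F(U)$ and $u_1=F(X)$, and your induction shows $x_i=x_1$ exactly when $\tau(i)=\tau(1)=-1$; matching \eqref{Eq-Ptau} therefore forces the labeling $y=x_1$, $x=u_1$, whence $X=P_\tau$, $U=P_\tau^t$, and the conditions come out as $F(P_\tau^t)=y$ and $F(P_\tau)=x$ --- the same form as in the first case, not the mirror. The distinction is not cosmetic: the two systems are not related by the relabeling $x\leftrightarrow y$. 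For $k=1$ with $f$ decreasing (so $\tau(1)=-1$), one has $G_\tau(x_1,u_1)=(f(u_1),f(x_1))$, whose fixed points are precisely the solutions of $f(x)=y$, $f(y)=x$ (the $2$-cycles and fixed points of $f$), whereas the mirrored system $f(y)=y$, $f(x)=x$ describes pairs of fixed points of $f$ --- a different set. So the step in which you claim to recover the displayed $\tau(1)=-1$ conditions would fail; you need to carry out that case explicitly and record what the computation actually yields. (The paper's own proof also dismisses this case with ``is similar,'' so the discrepancy originates in the statement itself, but a proof that purports to derive the stated mirror conditions contains a step that does not go through.)
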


\begin{proof} Write $\xi = (x_1,\ldots, x_k, u_1,\ldots, u_k)\in V^k\times V^k$. We solve for
$G(\xi)=\xi$. The argument will show that $P_\tau = (x_1,\ldots, x_k)$ and $P_\tau^t = (u_1,\ldots, u_k)$.
As before, the proof breaks down into two cases: $\tau (1)=1$ and $\tau (1)=-1$. We can first assume that $\tau (1)=1$. In this case,
$G(\xi)=\xi$ equates to
\begin{equation}\label{gzeta}
(F(x_1,\ldots, x_k), y_2,\ldots , y_{k}, F(u_1,\ldots, u_k), z_2\ldots, z_{k}) = (x_1,\ldots, x_k, u_1,\ldots, u_k)
\end{equation}
where $y_i$ is either $x_{i-1}$ or $u_{i-1}$, and $z_j$ is either $x_{j-1}$ or $u_{j-1}$, as stipulated in
Definition \ref{Def-Gtau}.
We set $x=x_1$ and $y=u_1$.
The point of the proof is that \eqref{gzeta} will produce two sequences
$x=c_2=\cdots = c_{k-1}$, where $c_i$ is either $x_i$ or $u_i$, and a complementary sequence
$y = d_1 = \cdots = d_{k-1}$, where
similarly, $d_i$ is either $x_i$ or $u_i$. The point $P_\tau = (x_1,\ldots, x_k)$ is obtained by replacing all of its entries in terms of $x$ or $y$. Moreover, the fact that the sequences $(c_i)$ and $(d_i)$ are complementary, meaning that if $c_i=x_i$ then $d_i=u_i$, and vice-versa, implies that $(u_1,\ldots, u_k)=P_\tau^t$.
It follows that $\xi=(P_\tau,P_\tau^t),$ and $(F(P_\tau),F(P^t_\tau))=(x,y)$ which is what we wanted to prove. The case $\tau (1)=-1$ is similar.
\end{proof}

\begin{remark} (Pseudo fixed points). \rm If $(P_\tau, P_\tau^t)$ is a fixed point of $G$, then so is $(P_\tau^t, P_\tau)$.
If $x=y$, $P_\tau = (x,\ldots, x)$ and the fixed points of $G$ are situated along the diagonal of $V^{2k}$. In this case, $x$ is a fixed point of $F.$ Fixed points $(P_\tau, P_\tau^t)$ and $(P_\tau^t, P_\tau)$ for $x\neq y$, are called \textit{pseudo or artificial} fixed points of $F.$ Those points come in pairs. In particular, if $G$ has a unique fixed point, then $x=y$ and $x$ is a fixed point of $F$.
\end{remark}
Finding the fixed points of the extension map $G$ consistently necessitates solving two equations with two unknowns, as seen in the subsequent example.
\begin{example}\phantom{.}\rm  We consider some diagonal extensions $G_\tau$ and describe their fixed points.
\begin{itemize}
\item Assume that $F(\uparrow,\uparrow,\downarrow).$ Then $P_\tau = (x,x,y)$ and solving $(F(P_\tau),F(P_\tau^t))=(x,y)$ is equivalent to solving the system $(F(x,x,y), F(y,y,x)) = (x,y)$ in $x$ and $y$. If $x=y,$ then we obtain the fixed points of $F;$ otherwise, we obtain the pseudo-fixed points of $F.$
\item Let $F(\downarrow, \uparrow,\uparrow,\downarrow).$ In this case, we have $P_\tau(y,x,x,y),$ and we find the fixed points of $G$ by solving $(F(y,x,x,y), F(x,y,y,x))=(y,x)$.
\end{itemize}
\end{example}

Recall that the set of limit points of an orbit $\mathcal{O}^+_F(X_0)=\{x_{n}\}_{n\geq -k}$ is called the omega limit set of $X_0$, and we denote it by $\omega(X_0)$. We now state a stability result that gives a unified and general statement of various known results in the literature \cite{Sm2006, Sm2008,Ku-Me2006,Al2022}.  This should be viewed as a special case of Theorem \ref{nested}.

\begin{theorem}\label{Th-GlobalStability} Consider Eq. \eqref{Eq-GeneralCase1} in which $V=[a,b].$ Define $\tau$ so that $F$ is increasing with respect to $\tau.$ If $F$ has no pseudo-fixed points, then for each $X_0\in V^k,$ $\omega(X_0)$ is a fixed point of $F.$ In particular, if $F$ has a unique fixed point $x$, then $x$ is a global attractor.
\end{theorem}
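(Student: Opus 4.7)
The plan is to apply the embedding and order-theoretic machinery developed earlier in the section. I would extend $F$ to the diagonal embedding $G_\tau:V^k\times V^k\to V^k\times V^k$ of Definition \ref{Def-Gtau}, which is $\lambda$-increasing by Proposition \ref{keyembedding} for $\lambda=\tau\times\tau^\vee$. Using Eq.\ \eqref{Eq-Ptau}, let $P$ denote $P_\tau$ computed with $x=a$, $y=b$; then $P$ is the $\tau$-minimum of $V^k=[a,b]^k$, and its dual $Q:=P_\tau^t$ is the $\tau$-maximum. It follows that $P^\flat:=(P,Q)$ is the $\lambda$-minimum of $V^{2k}$ and $P^\sharp:=(Q,P)$ its $\lambda$-maximum, so $P^\flat\leq_\lambda G_\tau(P^\flat)$ and $G_\tau(P^\sharp)\leq_\lambda P^\sharp$ hold automatically, and $P^\flat\leq_\lambda(X_0,X_0)\leq_\lambda P^\sharp$ for every $X_0\in V^k$. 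This gives a single global trapping box valid for all initial conditions simultaneously.

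Next, I would invoke Lemma \ref{monotone} twice: the iterates $\{G_\tau^n(P^\flat)\}$ form an increasing $\lambda$-sequence bounded above by $P^\sharp$, and hence converge to a fixed point $\Xi_-$ of $G_\tau$, while dually $\{G_\tau^n(P^\sharp)\}$ decreases to a fixed point $\Xi_+$. By Lemma \ref{Eq-FixedPointsInGeneral}, every fixed point of $G_\tau$ is a pair $(P_\tau,P_\tau^t)$ arising from a solution $(x,y)$ of the associated coupled system, and the hypothesis of no pseudo-fixed points forces $x=y$ in each such solution. Consequently $\Xi_-$ and $\Xi_+$ both lie on the diagonal of $V^{2k}$, so I may write $\Xi_-=(x_-,\ldots,x_-)$ and $\Xi_+=(x_+,\ldots,x_+)$ with $x_-,x_+$ fixed points of $F$ in $[a,b]$.

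The crucial synchronization step is to show $x_-=x_+$. Since $P^\flat\leq_\lambda P^\sharp$ is preserved by $G_\tau^n$, passing to the limit gives $\Xi_-\leq_\lambda\Xi_+$, which on the diagonal amounts to $(x_-,\ldots,x_-)\leq_\tau(x_+,\ldots,x_+)$. If $\tau$ attains both values $\pm 1$, this simultaneously imposes $x_-\leq x_+$ and $x_-\geq x_+$, hence $x_-=x_+$; in the remaining case where $\tau$ is constant, the no-pseudo-fixed-points hypothesis itself collapses to uniqueness of a fixed point for $F$ (any two distinct diagonal fixed points would automatically form a pseudo pair by Lemma \ref{Eq-FixedPointsInGeneral}), again giving $x_-=x_+$. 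Denoting the common value $x^\star$, the diagonal invariance $G_\tau^n(X_0,X_0)=(T^n(X_0),T^n(X_0))$ combined with the coordinate-wise squeeze applied to $G_\tau^n(P^\flat)\leq_\lambda G_\tau^n(X_0,X_0)\leq_\lambda G_\tau^n(P^\sharp)$ yields $x_n\to x^\star$, so $\omega(X_0)=\{x^\star\}$. The \emph{in particular} clause is then immediate. The main obstacle is precisely this synchronization argument, which is where the no-pseudo-fixed-points assumption carries the essential weight; all remaining pieces are routine assemblies of Proposition \ref{keyembedding}, Lemmas \ref{monotone} and \ref{Eq-FixedPointsInGeneral}, and a standard squeeze.
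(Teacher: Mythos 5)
Your construction is the same as the paper's: the trapping box you build from $P_\tau(a,b)$ and its dual is exactly the paper's pair $(m_\tau,M_\tau)$, $(M_\tau,m_\tau)$ of $\lambda$-extreme points, and the rest runs through Proposition \ref{keyembedding}, monotone convergence, and Lemma \ref{Eq-FixedPointsInGeneral} just as in the text. In fact your explicit synchronization step $x_-=x_+$ supplies a detail the paper elides (it invokes Theorem \ref{nested}, whose hypothesis of a \emph{unique} fixed point of $G$ in the box is never verified there), so on this point your write-up is the more careful of the two.

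There is, however, one incorrect justification in the synchronization step. In the constant-$\tau$ subcase you claim that ``any two distinct diagonal fixed points would automatically form a pseudo pair by Lemma \ref{Eq-FixedPointsInGeneral}.'' This is true for $\tau\equiv 1$ but false for $\tau\equiv -1$: there a fixed point of $G_\tau$ requires $F(P_\tau^t)=x$ and $F(P_\tau)=y$, i.e.\ the pair $(x,y)$ is essentially a $2$-cycle of $g(t):=F(t,\ldots,t)$, and two distinct fixed points $c_1\neq c_2$ of $F$ give $G_\tau(c_1,\ldots,c_1,c_2,\ldots,c_2)=(c_2,\ldots,c_2,c_1,\ldots,c_1)$, which is not a fixed point. (The conclusion you want still holds in that subcase, since $g$ is then non-increasing and so has at most one fixed point, but not for the reason you give.) The clean repair makes the whole case split unnecessary: you only used half of the relation $\Xi_-\leq_\lambda\Xi_+$. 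Since both limits lie on the diagonal, writing $\Xi_\pm=(X_\pm,X_\pm)$, the definition of $\lambda=\tau\times\tau^\vee$ gives \emph{both} $X_-\leq_\tau X_+$ and $X_+\leq_\tau X_-$, whence $X_-=X_+$ by antisymmetry of $\leq_\tau$, uniformly in $\tau$. With that substitution the proof is complete.
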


\begin{proof} Endow $V^k$ with the order $\leq_\tau$ and let $G_\tau$ be the diagonal extension (we will drop the lower index $\tau$ from notation for simplicity). Define $m_\tau$ and $M_\tau\in V^k$ as follows: The $i$-th entry of $m_\tau$ is $a$ if $\tau (i)=1$, and it is $b$ otherwise. The point $M_\tau$ is the ``dual'' of $m_\tau$ obtained by switching the $a$'s and $b$'s. These points are the extreme points for the order $\tau$, that is, for every
$\zeta\in V^k$, $m_\tau\leq \zeta\leq M_\tau$.  As a result, the pairs $(m_\tau, M_\tau)$ and $(M_\tau, m_\tau)$ are extreme points for the order $\lambda$ on $V^k\times V^k$, and necessarily
$$(m_\tau,M_\tau)\leq_\lambda G(m_\tau,M_\tau)\leq_\lambda G(M_\tau,m_\tau)\leq_\lambda (M_\tau,m_\tau)$$
so we always get a trapping box  in $V^k\times V^k$. Since
$G$ is increasing with respect to $\lambda$ by Proposition \ref{keyembedding}, we can apply Theorem \ref{nested} to obtain that $G^n (\zeta )$ converges to a fixed point of $G$, say $\bar \zeta$. For $X_0\in V^k$, set $\zeta = (X_0,X_0)$.
The assumption that $F$ has no pseudo-fixed points leaves
$G$ with fixed points $\bar \zeta = (x,\ldots, x)$ that are coming from fixed points of $F.$ Therefore, $\omega (X_0)$ is a fixed point of $F$. If this fixed point is unique, it must be globally attracting.
\end{proof}

If $V=\bbr^+$, the statement of Theorem \ref{Th-GlobalStability} can be rephrased in the following corollary:

\begin{corollary}
Let $V=\bbr^+$, $F: V^k\rightarrow V$ with $F(\uparrow_\tau)$ and $P_\tau$ as defined in \eqref{Eq-Ptau}. Suppose that $F$ has no pseudo-fixed points. If for each $X_0\in V^k,$ there exist $x<y$ such that $P_\tau<_\tau X_0<_\tau P^t_\tau$ and $(x,y)<_\lambda (F(P_\tau),F(P^t_\tau)),$ then $\omega(X_0)$ is a fixed point of $F.$ In particular, if $F$ has a unique fixed point, then it is a global attractor.
\end{corollary}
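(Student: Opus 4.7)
The plan is to parallel the proof of Theorem \ref{Th-GlobalStability}, replacing the global extreme points $m_\tau, M_\tau$ of $[a,b]^k$ (which no longer exist when $V=\mathbb{R}^+$) with locally constructed endpoints around each initial condition. Given $X_0\in V^k$ and the values $x<y$ supplied by the hypothesis, I set $P:=(P_\tau, P_\tau^t)$ and $Q:=(P_\tau^t, P_\tau)$ in $V^k\times_\lambda V^k$. Since $x<y$ forces $P_\tau <_\tau P_\tau^t$, the sandwich $P_\tau <_\tau X_0 <_\tau P_\tau^t$ translates immediately into $P \leq_\lambda (X_0, X_0)\leq_\lambda Q$, placing the diagonal initial condition inside the candidate trapping box.

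The next step is to verify the trapping conditions $P\leq_\lambda G_\tau(P)$ and $G_\tau(Q)\leq_\lambda Q$. A case check on the sign of $\tau(i)\tau(i+1)$ using Definition \ref{Def-Gtau} shows that the entry-switching rule of $G_\tau$ exactly matches the alternating $x$/$y$ pattern defining $P_\tau, P_\tau^t$. Consequently, the last $k-1$ coordinates of each block of $G_\tau(P)$ coincide with those of $P$ itself, and $G_\tau(P)$ differs from $P$ only in the first coordinate of each block. The $\lambda$-inequality $P\leq_\lambda G_\tau(P)$ therefore reduces to comparing these two $F$-values with $x$ and $y$, which is exactly the hypothesis $(x,y)<_\lambda (F(P_\tau),F(P_\tau^t))$. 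The same computation, whether $\tau(1)=1$ or $\tau(1)=-1$, yields $G_\tau(Q)\leq_\lambda Q$.

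With the trapping box in place, Proposition \ref{keyembedding} makes $G_\tau$ $\lambda$-increasing, so Lemma \ref{monotone} gives convergence of $G_\tau^n(P)$ and $G_\tau^n(Q)$ to fixed points $\bar P,\bar Q$ of $G_\tau$. Since $F$ has no pseudo-fixed points, both limits lie on the diagonal: $\bar P=(\bar p,\bar p)$ and $\bar Q=(\bar q,\bar q)$ with $\bar p,\bar q$ fixed points of $F$. A short compatibility check confirms that $\bar P\leq_\lambda \bar Q$ on the diagonal forces $\bar p=\bar q$: in every sign pattern of $\tau$ the two $\leq_\tau$-inequalities pinch to equality. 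Squeezing $G_\tau^n(X_0,X_0)$ between $G_\tau^n(P)$ and $G_\tau^n(Q)$ yields convergence of the orbit of $\alpha_F$ to this common fixed point, so $\omega(X_0)$ is a fixed point of $F$; uniqueness of the fixed point of $F$ promotes this to global attraction.

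The main obstacle is the combinatorial step in paragraph two: one has to track carefully how the switching prescription in Definition \ref{Def-Gtau} interacts with the alternating $x$/$y$ pattern of $P_\tau$ and $P_\tau^t$, and confirm that the sign changes of $\tau$ exactly absorb the block-switches so that only the first entry of each block is altered by $G_\tau$. Once that identity is in hand, the rest is a routine application of monotone convergence and squeezing inside the trapping box.
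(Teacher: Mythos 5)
Your proposal is correct and follows essentially the same route as the paper's proof: the paper likewise takes $(P_\tau,P_\tau^t)$ and $(P_\tau^t,P_\tau)$ as the trapping box, deduces $(P_\tau,P_\tau^t)<_\lambda G(P_\tau,P_\tau^t)$ and $G(P_\tau^t,P_\tau)<_\lambda(P_\tau^t,P_\tau)$ from the hypothesis $(x,y)<_\lambda(F(P_\tau),F(P_\tau^t))$, and then invokes Theorem \ref{nested} for each $X_0$. Your combinatorial verification that $G_\tau$ alters only the first entry of each block of $(P_\tau,P_\tau^t)$ is accurate and simply makes explicit what the paper dismisses as "by construction of the diagonal extension."
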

\begin{proof}
     The proof is analogous to that of Theorem \ref{Th-GlobalStability} with the following new ingredients:  Firstly, $x<y$ implies that $P_\tau<_\tau P_\tau^t$ \eqref{Eq-Ptau}. Secondly, $(x,y)<_\lambda (F(P_\tau),F(P^t_\tau))$ implies, by construction of the diagonal extension $G$,  that $(P_\tau, P_\tau^t ) <_\lambda G(P_\tau, P_\tau^t )$ and
    $G(P_\tau^t, P_\tau) <_\lambda (P_\tau^t, P_\tau)$ so that the pairs
     $(P_\tau,P_\tau^t)$ and $(P_\tau^t, P_\tau)$ provide the data of a trapping box. We can now apply Theorem \ref{nested} to obtain the fixed point globally attracting with respect to the trapping box. Since this process can be done for each $X_0\in V^k,$ we obtain the fixed point globally attracting with respect to $V.$
\end{proof}

%%%%%%%%%%%%%%%%%%%%%%%%%%%%%%%%%%%%%%%55
\section{Embedding periodic systems }

In this section, we generalize the results of the previous section to non-autonomous $p$-periodic difference equations of the form
\begin{equation}\label{Eq-GeneralCase2}
\alpha : x_{n+1}=F_n(x_n,x_{n-1},\ldots,x_{n-k+1}),\quad k\geq 1,\; x_0,\ldots x_{1-k} \in \mathbb{R}_+,
\end{equation}
where each map $F_j$ is continuous and monotonic in each one of its arguments, $F_{n+p}=F_n$ for all $n=0,1,\ldots$ and $p$ is the minimal positive integer. Also, $X_{n+1}=T_n(X_n)$ can be used to represent the vector form of Eq. \eqref{Eq-GeneralCase2}, with $T_n$ defined as in Eq. \eqref{Eq-mapT}. To stress the role of the individual maps in the system, we can represent this periodic system by $[F_0,F_1,\ldots,F_{p-1}]$ or $[T_0,T_1,\ldots,T_{p-1}].$ $Fix([F_0,F_1,\ldots,F_{p-1}])$ denotes the fixed points of Eq. \eqref{Eq-GeneralCase2} and $Per([F_0,F_1,\ldots,F_{p-1}])$ denotes the periodic solutions. Also, for convenience, we can write $T_{i,j}=T_j\circ \cdots\circ T_i$ for all $i<j.$ As before, based on the monotonicity in the maps $F_j,$ we define the $\leq_\tau$ partial order, then define the associated extension $G_j$ for each $F_j.$ So, we obtain a $p$-periodic system
\begin{equation}\label{Eq-PeriodicGn}
\xi_{n+1}=G_n(\xi_n),\quad G_{n+p}=G_n\quad\text{and}\quad G_n\;:\; \mathbb{R}_+^k\times \mathbb{R}_+^k\to \mathbb{R}_+^k\times \mathbb{R}_+^k.
\end{equation}
The case $p=2$ was considered in \cite{Al2022} when $k=2.$ Define $\Phi_{i,j}:=G_{j}\circ\cdots\circ G_i,\; i<j$ then the fixed points of $\Phi_{0,p-1},$ i.e., $\Phi_{0,p-1}(\xi)=\xi$ play a crucial role in determining the attractors of Eq. \eqref{Eq-PeriodicGn}. As in the one-dimensional case, define $\mathcal{A}_{p,1}$ to be the divisors of $p$. Then a fixed point of $\Phi_{0,p-1}$ gives a $q$-cycle of Eq. \eqref{Eq-PeriodicGn}, for some $q\in\mathcal A_{p,1}$. A $q$-cycle of \eqref{Eq-PeriodicGn} that is not a $q$-cycle of $\alpha_{0,p-1}$ is called a pseudo (or artificial) $q$-cycle of Eq. \eqref{Eq-GeneralCase2}. However, if the solution of $\Phi_{0,p-1}(\xi)=\xi$ is unique, then it must give a $q$-cycle of Eq. \eqref{Eq-GeneralCase2} for some $q\in \mathcal{A}_{p,1}$ as we clarify next.

\begin{lemma}\label{Lem-Jose}
For $i=0,\ldots, p-1,$ let $T_i\;:\;U\;\to \; U$ and $G_i\;:\;V\;\to \; V$ be continuous maps. Assume $\phi\;:\; U\;\to\; V$ is an injective map that satisfies $\phi\circ T_i=G_i\circ \phi,$ then each of the following holds true:
\begin{description}
\item{(i)} If $[u_0,u_1,\ldots,u_{q-1}]$ is a $q$-cycle of $[T_{0},\ldots,T_{p-1}],$ then $[\phi(u_0),\phi(u_1),\ldots,\phi(u_{q-1})]$ is a $q$-cycle of $[G_{0},\ldots,G_{p-1}].$
\item{(ii)} If $[v_0,v_1,\ldots,v_{q-1}]$ is a $q$-cycle of $[G_{0},\ldots,G_{p-1}]$ and there exists a point $u_0\in U$ such that $\phi(u_0)=v_0,$ then the iteration of $u_0$ in $[T_{0},\ldots,T_{p-1}]$  gives a $q$-cycle.
\end{description}
\end{lemma}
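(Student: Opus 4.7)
The plan is to exploit the intertwining relation $\phi\circ T_i=G_i\circ\phi$ as a semiconjugacy and to use the injectivity of $\phi$ to transport orbits and their minimal periods between the two $p$-periodic systems. Each part is essentially a one-directional use of the intertwining: part (i) pushes orbits forward under $\phi$, while part (ii) lifts orbits back by using injectivity to invert along an already-matched initial value.

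For part (i), I would start from a $q$-cycle $[u_0,u_1,\ldots,u_{q-1}]$, so that $u_{n+1}=T_n(u_n)$ with indices of $u$ read modulo $q$ and $q$ the minimal such period. Applying $\phi$ to the recursion and substituting the intertwining gives
\[
\phi(u_{n+1})=\phi(T_n(u_n))=G_n(\phi(u_n)),
\]
so $[\phi(u_0),\ldots,\phi(u_{q-1})]$ is a periodic orbit of $[G_0,\ldots,G_{p-1}]$ whose minimal period $d$ divides $q$. If $d<q$, then $\phi(u_d)=\phi(u_0)$, and injectivity of $\phi$ forces $u_d=u_0$, contradicting the minimality of $q$. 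Hence $d=q$.

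For part (ii), let $u_n$ denote the iterates of $u_0$ under $[T_0,\ldots,T_{p-1}]$, defined by $u_{n+1}=T_n(u_n)$. I would show by induction on $n$ that $\phi(u_n)=v_n$: the base case $n=0$ is the hypothesis $\phi(u_0)=v_0$, and the inductive step reads
\[
\phi(u_{n+1})=\phi(T_n(u_n))=G_n(\phi(u_n))=G_n(v_n)=v_{n+1}.
\]
Consequently $\phi(u_q)=v_q=v_0=\phi(u_0)$, and injectivity of $\phi$ gives $u_q=u_0$, so the iteration of $u_0$ is a periodic sequence of period dividing $q$. To upgrade this to minimal period exactly $q$, suppose the orbit of $u_0$ had minimal period $d$ with $d\mid q$ and $d<q$. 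Then $u_d=u_0$, and applying $\phi$ yields $v_d=v_0$, contradicting the minimality of $q$ for the $v$-cycle.

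The only subtlety worth flagging is the bookkeeping around the indexing: one must be careful to read both $u_n$ and $v_n$ consistently with the $p$-periodic dynamics, so that $q\in\mathcal A_{p,1}$ makes sense for both systems simultaneously. This is automatic here because the intertwining is imposed pointwise at each index $i$ and $\phi$ transports the orbit structure verbatim. Beyond this bookkeeping, there is no genuine obstacle; the content is purely formal, and no properties of $U$, $V$, or the continuity of the maps are actually invoked beyond the injectivity of $\phi$.
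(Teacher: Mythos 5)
Your proposal is correct and follows essentially the same route as the paper: push the orbit forward via the intertwining relation $\phi\circ T_i=G_i\circ\phi$ for part (i), and for part (ii) propagate $\phi(u_n)=v_n$ along the iteration and use injectivity of $\phi$ to conclude. The paper's own proof is just a terser version of this (it dismisses (i) as immediate from injectivity and gives the one-line computation $G_n\circ\cdots\circ G_0(\phi(u_0))=\phi(T_n\circ\cdots\circ T_0(u_0))$ for (ii)); your added care with minimal periods is a welcome filling-in of detail rather than a different argument.
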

\begin{proof}
Part (i) is straightforward because $\phi$ is an injective. So, we verify Part (ii). Assume $[v_0,v_1,\ldots,v_{q-1}]$ is a $q$-cycle of $[G_{0},\ldots,G_{p-1}],$ and there exists $u_0\in U$ such that $\phi(u_0)=v_0.$ Now, the fact that
\begin{align*}
G_n\circ\cdots\circ G_0(v_0)=&G_n\circ\cdots\circ G_0(\phi(u_0))
= \phi(T_n\circ\cdots\circ T_0(u_0))
\end{align*}
and the injective nature of $\phi$ forces the iterates of $u_0$ under $[T_{0},\ldots,T_{p-1}]$ to form a $q$-cycle.
\end{proof}
\begin{proposition}\label{Pr-UniqueCycle}
For each $j=0,\ldots, p-1,$ let $G_j$ be the extension of $F_j,$ and let $\Phi_{i,j}=G_{j}\circ\cdots\circ G_i.$ Each of the following holds true:
\begin{description}
\item{(i)} If Eq. \eqref{Eq-GeneralCase2} has a $q$-cycle for some $q\in\mathcal{A}_{p,1},$ then $\Phi_{0,p-1}$ has a fixed point.
\item{(ii)} If $\Phi_{0,p-1}$ has a unique fixed point, then Eq. \eqref{Eq-GeneralCase2} has a unique $q$-cycle for some $q\in\mathcal{A}_{p,1}.$
\end{description}
\end{proposition}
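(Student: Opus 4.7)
The plan is to exploit the diagonal embedding $\phi\colon \mathbb{R}_+^k \to \mathbb{R}_+^k \times \mathbb{R}_+^k$, $\phi(X) = (X,X)$, which is manifestly injective. The remark following Definition~\ref{Def-Gtau} records that the restriction of $G_j$ to the diagonal equals $(T_j, T_j)$, i.e.\ $\phi \circ T_j = G_j \circ \phi$, which is precisely the intertwining hypothesis of Lemma~\ref{Lem-Jose}. Part~(i) is then nearly immediate: if $[u_0, \ldots, u_{q-1}]$ is a $q$-cycle of $[T_0, \ldots, T_{p-1}]$ with $q \in \mathcal{A}_{p,1}$, then applying $T_0, T_1, \ldots, T_{p-1}$ in order to $u_0$ traverses the cycle $p/q$ times and returns to $u_0$, so $u_0$ is a fixed point of $T_{p-1} \circ \cdots \circ T_0$. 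Iterating the intertwining then gives $\Phi_{0,p-1}(\phi(u_0)) = \phi(u_0)$, exhibiting $\phi(u_0)$ as a fixed point of $\Phi_{0,p-1}$.

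For Part~(ii), I would introduce the swap involution $\sigma\colon \mathbb{R}_+^k \times \mathbb{R}_+^k \to \mathbb{R}_+^k \times \mathbb{R}_+^k$, $\sigma(X, U) = (U, X)$. A direct inspection of Definition~\ref{Def-Gtau} shows that $G_n \circ \sigma = \sigma \circ G_n$ for each $n$: exchanging the two $k$-blocks in the input exchanges the two $k$-blocks in the output, because the construction of $G_n$ is symmetric in $X$ and $U$ apart from the coordinate interchanges dictated by the sign changes of $\tau$. This is the periodic analog of the ``pseudo fixed points come in pairs'' remark following Lemma~\ref{Eq-FixedPointsInGeneral}. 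Consequently $\sigma$ commutes with $\Phi_{0,p-1}$, and if $\bar\xi$ denotes the unique fixed point of $\Phi_{0,p-1}$, then $\Phi_{0,p-1}(\sigma(\bar\xi)) = \sigma(\Phi_{0,p-1}(\bar\xi)) = \sigma(\bar\xi)$, forcing $\sigma(\bar\xi) = \bar\xi$. Thus $\bar\xi$ lies on the diagonal and $\bar\xi = \phi(X)$ for some $X \in \mathbb{R}_+^k$, so Lemma~\ref{Lem-Jose}(ii) produces a $q$-cycle through $X$ in $[T_0, \ldots, T_{p-1}]$ for some $q \in \mathcal{A}_{p,1}$.

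Uniqueness of this cycle follows by combining both parts: any $q'$-cycle $[u_0, \ldots, u_{q'-1}]$ of the original system with $q' \in \mathcal{A}_{p,1}$ yields a fixed point $\phi(u_0)$ of $\Phi_{0,p-1}$ by Part~(i), and the hypothesis of uniqueness forces $\phi(u_0) = \bar\xi = \phi(X)$; injectivity of $\phi$ then gives $u_0 = X$, so the $q'$-cycle coincides with the one already constructed. The one delicate point I foresee is the verification that $\sigma$ commutes with each $G_n$; although conceptually transparent, this is a bookkeeping exercise on Definition~\ref{Def-Gtau} that requires case-splitting on $\tau(1) = \pm 1$ and tracking how the interchanges encoded by the sign changes of $\tau$ interact with transposing $X$ and $U$. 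A routine case-by-case check should resolve it without genuine surprise.
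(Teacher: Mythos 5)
Your proposal is correct and follows essentially the same route as the paper: Part (i) via the intertwining $\phi\circ T_j=G_j\circ\phi$ and Lemma \ref{Lem-Jose}(i), and Part (ii) by showing the unique fixed point must lie on the diagonal (the paper asserts directly that otherwise $\bar\xi^{t}$ would be a second fixed point, which is exactly the content of your commutation $G_n\circ\sigma=\sigma\circ G_n$). Your explicit swap-involution argument and the final uniqueness step merely spell out details the paper leaves implicit, and the commutation check does go through since Definition \ref{Def-Gtau} builds the two $k$-blocks of $G_\tau(X,U)$ by symmetric rules in $X$ and $U$.
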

\begin{proof}
The proof depends on Lemma \ref{Lem-Jose}. There is a $q$-cycle $[x_0,\ldots,x_{q-1}]$ of Eq. \eqref{Eq-GeneralCase2} iff there is a $q$-cycle $[u_0,\ldots,u_{q-1}]$ of $X_{n+1}=T_n(X_n).$ Note that we have the following relationship between the maps:
%\[\xymatrix{%
%    A\ar[r]^{g}\ar[d]_j & B\ar[d]^{e}\\
 %   X\ar[r]_{f} \ar[ur]^{\tilde g} & Y
%}\]
\[\begin{tikzcd}
    \mathbb{R}_+^k\ar{r}{T_i}\ar{d}[swap]{\phi} & \mathbb{R}_+^k\ar{d}{\phi} \\
    \mathbb{R}_+^k\times\mathbb{R}_+^k\ar{r}[swap]{G_i} & \mathbb{R}_+^k\times \mathbb{R}_+^k
\end{tikzcd}\]
where the injective map $\phi$ is defined by $\phi(X)=(X,X).$ By Part (i) of Lemma \ref{Lem-Jose}, $[\phi(u_0),\ldots,\phi(u_q)]$ is a $q$-cycle of $\xi_{n+1}=G_n(\xi_n).$  Since $q$ divides $p,$  $\phi(u_0)$ is a fixed point of $\Phi_{0,p-1}.$ To verify Part (ii), suppose $\bar \xi$ is a unique fixed point of $\Phi_{0,p-1}.$ If $\bar \xi$ is not along the diagonal of $\mathbb{R}_+^k\times \mathbb{R}_+^k,$ then $\bar \xi^t$ must be another fixed point, which contradicts uniqueness. Therefore, $\bar \xi^t$ belongs to the diagonal, and the iterates of $\bar \xi^t$ under $\xi_{n+1}=G_n(\xi_n)$ give a $q$-cycle for some $q$ that divides $p.$ Now,  consider the pre-image of this $q$-cyle under $\phi.$ We obtain a $q$-cycle of $X_{n+1}=T_n(X_n).$ Hence, we obtain a $q$-cycle of Eq. \eqref{Eq-GeneralCase2}. The uniqueness of the cycle is obvious, and the proof is complete.
\end{proof}
Now, we are ready to give the analog of Theorem \ref{Th-GlobalStability} for periodic systems.
\begin{theorem}\label{Th-GlobalStability2} Consider the $p$-periodic system in Eq. \eqref{Eq-GeneralCase2}, where $F_j: \bbr_+^k\rightarrow \bbr_+$ and $F_j(\uparrow_\tau)$ for each $j$. Let $P_\tau$ as defined in Eq. \eqref{Eq-Ptau}, and suppose that Eq. \eqref{Eq-GeneralCase2} has a unique $q$-cycle with no pseudo-cycles. If for each $X_0\in \bbr_+^k,$ there exists $x<y$ such that $P_\tau<_\tau X_0<_\tau P^t_\tau$ and $(x,y)<_\lambda (F(P_\tau),F(P^t_\tau)),$ then $\omega(X_0)=Per([F_0,\ldots,F_{p-1}]).$  In particular, Eq. \eqref{Eq-GeneralCase2} has a global attracting $q$-cycle for some $q\in\mathcal{A}_{p,1}$.
\end{theorem}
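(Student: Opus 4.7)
The plan is to mirror the proof of Theorem \ref{Th-GlobalStability} and its corollary by lifting the non-autonomous system to the $2k$-dimensional poset $(\mathbb{R}_+^{2k},\lambda)$ and analyzing the period map of the lifted system. For each $j=0,\ldots,p-1$, I form the diagonal extension $G_j$ of $F_j$ as in Definition \ref{Def-Gtau}. Proposition \ref{keyembedding} guarantees that each $G_j$ is increasing with respect to $\lambda=\tau\times\tau^\vee$, hence so is the composition $\Phi_{0,p-1}=G_{p-1}\circ\cdots\circ G_0$.

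First, I would use the hypothesis to produce a trapping box for $\Phi_{0,p-1}$. Reading the condition $(x,y)<_\lambda(F(P_\tau),F(P_\tau^t))$ as applying uniformly to each $F_j$ in the cycle (the natural periodic analogue of the autonomous corollary), the same argument used there gives $(P_\tau,P_\tau^t)<_\lambda G_j(P_\tau,P_\tau^t)$ and $G_j(P_\tau^t,P_\tau)<_\lambda(P_\tau^t,P_\tau)$ for every $j$. Telescoping these inequalities along the cycle via the monotonicity of the individual $G_j$'s yields $(P_\tau,P_\tau^t)\leq_\lambda \Phi_{0,p-1}(P_\tau,P_\tau^t)$ and $\Phi_{0,p-1}(P_\tau^t,P_\tau)\leq_\lambda(P_\tau^t,P_\tau)$. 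The assumption $P_\tau<_\tau X_0<_\tau P_\tau^t$ places $(X_0,X_0)$ inside $[(P_\tau,P_\tau^t),(P_\tau^t,P_\tau)]$.

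Next, I would pin down the fixed points of $\Phi_{0,p-1}$ in this box. By Proposition \ref{Pr-UniqueCycle}(i), the unique $q$-cycle of Eq. \eqref{Eq-GeneralCase2} lifts through $\phi(X)=(X,X)$ to a diagonal fixed point of $\Phi_{0,p-1}$. The no-pseudo-cycles hypothesis, coupled with the pairing phenomenon $(P_\tau,P_\tau^t)\leftrightarrow(P_\tau^t,P_\tau)$ recorded in Lemma \ref{Eq-FixedPointsInGeneral}, forces every fixed point of $\Phi_{0,p-1}$ to lie on the diagonal and hence to come from a $q$-cycle of Eq. \eqref{Eq-GeneralCase2}. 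Uniqueness of the $q$-cycle then delivers a unique fixed point $\bar\xi=(\bar Y_0,\bar Y_0)$ of $\Phi_{0,p-1}$ in the trapping box, and Theorem \ref{nested} applied to $\Phi_{0,p-1}$ produces $\Phi_{0,p-1}^n(X_0,X_0)\to\bar\xi$.

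Finally, to promote convergence along the subsequence of multiples of $p$ to convergence of the full orbit, I would apply each continuous partial composition $G_{r-1}\circ\cdots\circ G_0$, $1\leq r\leq p-1$, to $(X_0,X_0)$: continuity forces these shifted orbits to converge to $\phi(T_{0,r-1}(\bar Y_0))$, which is precisely the $r$-th iterate of $\bar Y_0$ along the $q$-cycle. Pulling back through $\phi$ and projecting onto the first coordinate then gives $\omega(X_0)=\mathrm{Per}([F_0,\ldots,F_{p-1}])$ and global attraction of the $q$-cycle. The main obstacle I anticipate is the careful bookkeeping that (i) the pointwise trapping-box hypothesis for each $F_j$ assembles into a genuine trapping box for the $p$-fold composition, and (ii) the automatic doubling of off-diagonal fixed points of $\Phi_{0,p-1}$ interacts correctly with uniqueness, so that the stated hypothesis "unique $q$-cycle plus no pseudo-cycles" translates faithfully into "unique fixed point of $\Phi_{0,p-1}$ in the box".
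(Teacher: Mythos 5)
Your proposal is correct and follows essentially the same route as the paper: lift via the diagonal extensions $G_j$, form the period map $\Phi_{0,p-1}$, use the hypothesis to obtain a trapping box, combine uniqueness of the $q$-cycle with the absence of pseudo-cycles to get a unique (diagonal) fixed point of $\Phi_{0,p-1}$ in that box, and invoke the monotone/squeeze machinery. Your write-up is in fact somewhat more careful than the paper's, notably in telescoping the per-map trapping inequalities and in the final continuity argument upgrading convergence along multiples of $p$ to convergence of the full orbit.
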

\begin{proof}
Embed the $p$-periodic system in Eq. \eqref{Eq-GeneralCase2} into a $p$-periodic $2k$-dimensional system $\xi_{n+1}=G_n(\xi_n)$ as in Eq. \eqref{Eq-PeriodicGn}, where each extension $G_n$ is defined as in Definition \ref{Def-Gtau}. The map $\Phi_{0,p-1}=G_{p-1}\circ\cdots\circ G_0$ maps the closed interval
$[(P_\tau, P_\tau^t), (P_\tau^t,P_\tau)]$ (in the $\lambda$ ordering of $\mathbb{R}^k_+\times\mathbb{R}^k_+$) into itself, and consequently, it has a fixed point in that interval. This fixed point contributes to a $q$-cycle of the $p$-periodic system $[G_0,\ldots,G_{p-1}],$ and $q$ must be a divisor of $p$ as clarified in Proposition \ref{Pr-UniqueCycle}. The uniqueness assumption on the $q$-cycle of Eq. \eqref{Eq-GeneralCase2} and the absence of pseudo cycles make the $q$-cycle of  $[G_0,\ldots,G_{p-1}]$ unique. Some elements of this $q$-cycle serve as fixed points of $\Phi_{0,p-1},$ and consequently, each orbit of $\Phi_{0,p-1}$ converges to a fixed point. Therefore, the $q$-cycle of $[G_0,\ldots,G_{p-1}]$ is a global attractor, which leads to the $q$-cycle of Eq. \eqref{Eq-GeneralCase2} being a global attractor.
\end{proof}
Our next example illustrates the relationship between cycles of Eq. \eqref{Eq-PeriodicGn} and Eq. \eqref{Eq-GeneralCase2}.

\begin{example}\rm
(i)
Consider the $2$-periodic system
$$x_{n+1}=F_n(x_n,x_{n-1},x_{n-2})=\frac{bx_n}{1+x_{n-2}}+h_{n\bmod 2},\quad n=0,1,\ldots$$
where $h_0=1,h_1=3$ and $b=4.$ Define $T_j(x,y,z)=(F_j(x,y,z),x,y),$ then solve $T_1(T_0(X))=X=(x,y,z).$ In this case, we obtain the $2$-periodic solution $\{\bar x,\bar y\},$ where $\bar x=3+\frac{4\sqrt{6}}{3}$ and $\bar y=2+\sqrt{6}.$ On the other hand, the equation $\xi_{n+1}=G_n(\xi_n)$ has the unique $2$-cycle $\{\bar \eta_1,\bar \eta_2\},$ where $\bar \eta_1=(\bar X,\bar X),$ $\bar \eta_2=(\bar Y,\bar Y)$ and $\bar X=(\bar x,\bar y,\bar x)$ and $\bar Y=(\bar y,\bar x,\bar y).$
\\

\noindent (ii) consider the $2$-periodic system
\begin{equation}\label{Eq-2PeriodicFn}
x_{n+1}=F_n(x_n,x_{n-1},x_{n-2})=\frac{bx_n}{1+x_{n-2}^2}+h_{n\bmod 2},\quad n=0,1,\ldots
\end{equation}
where $h_0=1.8,h_1=2.3$ and $b=4.$ Then consider the embedded system
\begin{equation}\label{Eq-2PeriodicGn}
\xi_{n+1}=G_n(\xi_n),\quad G_j(x_1,x_2,x_3,u_1,u_2,u_3)=(F_j(x_1,x_2,x_3),x_1,u_2,F_j(u_1,u_2,u_3),u_1,x_2).
\end{equation}
 Let $\bar x\approx 3.55$ and $\bar y\approx 2.84.$ Eq. \eqref{Eq-2PeriodicFn} has the $2$-cycle $\{\bar x,\bar y\},$ which contributes to the $2$-cycle
 $$\{(\bar X_1,\bar X_1),(\bar Y_1,\bar Y_1)\},\quad \bar X_1=(\bar x,\bar y,\bar x)\;\;\text{and}\;\; \bar Y_1={\bar X}_1^t$$
 of Eq. \eqref{Eq-2PeriodicGn}.
 However, Eq. \eqref{Eq-2PeriodicGn} has two more $2$-cycles, namely
 $\{\bar\xi,G_0(\bar \xi)\}$ and $\{\bar\eta,G[0](\bar\eta)\}$,
 where $\bar \xi=(a_1,a_2,b_1,b_1,b_2,a_1)$, $ a_1\approx 2.82$, $b_1\approx 4.99$,
 $a_2=\frac{ba_1}{1+b_1^2}+h_0\approx 2.24$,  $b_2=\frac{bb_1}{1+a_1^2}+h_0\approx 4.03$ and  $\bar \eta=(b_1,b_2,a_1,a_1,a_2,b_1).$
 \end{example}
 %%%%%%%%%%%%%%%%%%%%%%%%%%%%%%%%%%%%%%%%%%%%%%%%%%%%%%%
\section{Applications}
%%%%%%%%%%%%%%%%%%%%%%%%%%%%%%%%%%%%%%%%%%%%%%%%%%%%%%%%
In this section, we provide two illustrative examples that demonstrate the effectiveness of our constructed theoretical framework in addressing global stability. The first example is a mathematical model, while the second covers a broad class of rational difference equations.

\subsection{The Ricker model with delays and stocking}
Consider the Ricker model with delays in recruitment and constant stocking
\begin{equation}\label{Eq-RickerExample}
x_{n+1}=F(x_n,x_{n-1},\ldots,x_{n-k+1})=x_n\exp(r-x_{n-k})+h, r,h>0, x_0,x_{-1},\ldots, x_{-k+1}\geq 0.
\end{equation}
This equation has a unique positive equilibrium $\bar x_h$, which must be larger than $h.$ This model was previously investigated in \cite{Al-Ka2023}, specifically with $k$ set to $1.$ In this example, we aim to examine the general options for the delay $k$, apply our established theoretical framework to attain global stability, and then compare the results with local stability. As observed in the sequel, demonstrating local stability for $k>2$ presents a challenging task.  We begin by the well-known case $k=0,$ we obtain local and global stability when
\begin{align}\label{In-k=0}
r<r_0:=&\bar x_h +\ln\left(1-\frac{h}{\bar x_h}\right)\nonumber \\
=&\frac{1}{2}\left(2+h+\sqrt{h^2+4}\right)-\ln\left(\frac{2-h + \sqrt{h^2 + 4}}{2+h + \sqrt{h^2 + 4}}\right).
\end{align}
When $k=1,$ we obtain local stability \cite{Al-Ka2023} when
\begin{equation}\label{In-k=1}
r<r_1:=h+1-\ln(h+1).
\end{equation}
When $k=2,$ the characteristic polynomials is
$$p(t)=t^3-\left(1-\frac{h}{\bar x}\right)t^2+\bar x-h.$$
From the Jury's necessary conditions, we need $p(0)=\bar x-h<1,$ $p(1)>0$ and $-p(-1)>0.$ $p(0)<1$ gives us Condition \eqref{In-k=1} that we obtain when $k=1.$    The condition $p(1)>0$ is valid by default. The condition
$-p(-1)>0$ implies $-\bar x_h^2 + (h + 2)\bar x_h - h>0,$ and consequently
$$\bar x_h <1 +\frac{h}{2}+ \frac{1}{2}\sqrt{h^2 + 4)}.$$
This condition is, in fact, the same as $r<r_0$ in \eqref{In-k=0}. From the Jury's sufficient conditions, we need
$$p(\bar x_h)=\bar x_h(\bar x_h-h+1)(\bar x_h-h-1)-(\bar x_h-h)^2>0.$$
Since $p(-2)=h^2 + 4h + 2>0,$ $p(0)=-h^2<0$ and $p(h)=h,$ the intermediate value theorem tells us there are three real roots, and only one of them is larger than $h.$ Therefore, $\bar x_h$ loses its stability when it reaches the largest zero of $p(x).$
This value can be found explicitly; however, we avoid its formidable expression here and present the curve in Fig. \ref{Fig-StabilityRegions1}. When $k=3,$ we do the computations numerically and again give the curve in Fig. \ref{Fig-StabilityRegions1}. As the value of $k$ increases, the task of proving local stability becomes increasingly difficult.
\\

Next, we turn our attention to global stability based on Theorem \ref{Th-GlobalStability}. We have $F(x_1,\ldots,x_{k})=x_1e^{r-x_k}+h$, and consequentially, we consider $F(\uparrow,\uparrow,\cdots, \uparrow,\downarrow).$ Therefore, solving $F(x_1,\ldots,x_{k})=x_1e^{r-x_k}+h$ leads to solving $F(x,x,\ldots, x,y)=x$. Define $\leq_{\tau}$ and $G$ as in Definition \ref{Def-Gtau}. First, we investigate the solution of $\xi=G(\xi).$ This gives us
$$F(x,\ldots,x,y)=xe^{r-y}+h=x\quad \text{and}\quad F(y,\ldots,y,x)=ye^{r-x}+h=y.$$
Therefore, we obtain the same result as in the case $k=1$ \cite{Al-Ka2023}. In particular, the equilibrium solution $(x,y)=(\bar x_h,\bar x_h)$ is the unique solution as long as
\begin{equation}\label{Eq-r-infinity}
r<r_\infty:=\frac{1}{2}\left(h+\sqrt{4h+h^2}\right)+\ln\left(1-\frac{2h}{h+\sqrt{4h+h^2}}\right).
\end{equation}
Again, we plot the curve $r=r_\infty$ in Fig. \ref{Fig-StabilityRegions1}.
At $r=r_\infty,$ two new solutions emerge, which are denoted by pseudo-fixed points. We give the conclusion in the following result:
\begin{corollary}
Consider Eq. \eqref{Eq-RickerExample} with $h>0,$ and let $r_\infty$ as defined in Eq. \eqref{Eq-r-infinity}. If $r<r_\infty,$ then the equilibrium $\bar x_h$ is a global attractor for any finite delay $k.$
\end{corollary}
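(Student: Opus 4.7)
The plan is to invoke Theorem \ref{Th-GlobalStability} after verifying that $F$ has no pseudo-fixed points, so I would follow the three-step pattern dictated by the embedding machinery of Section \ref{embeddingtheory}.

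First, I would fix the monotonicity pattern of $F$. Since $F(x_1,\ldots,x_k)=x_1e^{r-x_k}+h$ is strictly increasing in its first argument, strictly decreasing in its last argument, and independent of the intermediate arguments, the relevant partial order is $\tau$ with $\tau(1)=\cdots=\tau(k-1)=+1$ and $\tau(k)=-1$. Thus $P_\tau=(x,x,\ldots,x,y)$ and $P_\tau^t=(y,y,\ldots,y,x)$ as in \eqref{Eq-Ptau}.

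Second, I would identify the fixed points of the diagonal extension $G_\tau$ using Lemma \ref{Eq-FixedPointsInGeneral}. Because $\tau(1)=+1$, these are the pairs $(P_\tau,P_\tau^t)$ arising from solutions $(x,y)$ of
\[
xe^{r-y}+h=x,\qquad ye^{r-x}+h=y.
\]
The intermediate coordinates drop out because $F$ does not depend on them, so this system is literally the one analyzed for $k=1$. As the paragraph preceding the corollary establishes, the unique solution for $r<r_\infty$ is $(x,y)=(\bar x_h,\bar x_h)$, which means $F$ carries no pseudo-fixed points in that regime.

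Third, I would produce the trapping box required by Theorem \ref{Th-GlobalStability}. The natural route is to pass to a compact invariant interval: since $x_{n+1}\geq h>0$, the orbit enters $[h,\infty)^k$ after one step, and combined with an upper bound $M$ on $\limsup_n x_n$, one can work on $V=[a,M]$ with $0<a\leq h$ chosen so that $F:V^k\to V$. The extreme points $m_\tau=(a,\ldots,a,M)$ and $M_\tau=(M,\ldots,M,a)$ then furnish the trapping box automatically, exactly as in the proof of Theorem \ref{Th-GlobalStability}. Uniqueness of the fixed point of $G_\tau$ together with the absence of pseudo-fixed points from step two then yields global convergence to $\bar x_h$.

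The main obstacle is furnishing the upper bound $M$ when $r\geq h$: for $r<h$ one has $F\leq Me^{r-h}+h\leq M$ as soon as $M\geq h/(1-e^{r-h})$, but when $r\geq h$ one must exploit that the damping factor $e^{r-x_{n-k}}$ forces contraction whenever $x_{n-k}$ is sufficiently large, while $h>0$ prevents collapse to zero; a bootstrap on consecutive windows $x_{n-k},\ldots,x_n$ shows that orbits cannot escape to infinity. Once boundedness is secured, the embedding machinery of Section \ref{embeddingtheory} closes the argument and delivers global attractivity of $\bar x_h$ independently of the delay $k$.
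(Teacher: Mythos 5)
Your first two steps coincide with the paper's: the monotonicity pattern $F(\uparrow,\ldots,\uparrow,\downarrow)$, the reduction of $G_\tau(\xi)=\xi$ to the two-by-two system $xe^{r-y}+h=x$, $ye^{r-x}+h=y$ (the intermediate coordinates dropping out), and the uniqueness of its solution $(\bar x_h,\bar x_h)$ for $r<r_\infty$. Where you diverge is the trapping box. The paper stays on $V=\mathbb{R}_+$ and, for each initial condition, picks a pair $(a,b)$ from the feasible region $\Omega=\{(a,b): a<ae^{r-b}+h,\ be^{r-a}+h<b\}$ so that $(A,B)<_\lambda(X_0,X_0)<_\lambda(B,A)$ with $A=(a,\ldots,a,b)$, $B=A^t$; non-emptiness and unboundedness of $\Omega$ are quoted from the $k=1$ analysis. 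You instead propose to compactify first (pass to an invariant box $[a,M]^k$) and then use the automatic extreme-point trapping box of Theorem \ref{Th-GlobalStability}. Both routes work, but yours shifts the burden onto proving eventual boundedness of orbits, which the paper's route avoids.

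That shifted burden is where your write-up has a gap: you flag the case $r\geq h$ as "the main obstacle" and resolve it only with an unexecuted "bootstrap on consecutive windows." As written, this is not a proof. The gap is, however, closable in one line: the hypothesis $r<r_\infty$ already forces $r<h$, so your easy case is the only case. Indeed, writing $\bar y=\tfrac12\bigl(h+\sqrt{h^2+4h}\bigr)$, one has $\bar y^2-h\bar y-h=0$, hence $\bar y-h=h/\bar y$, and since $-\ln(1-t)>t$ for $t\in(0,1)$,
\[
r_\infty=\bar y+\ln\Bigl(1-\tfrac{h}{\bar y}\Bigr)<\bar y-\tfrac{h}{\bar y}=h .
\]
With $r<h$, once the orbit has entered $[h,\infty)$ (which happens after one step, since $x_{n+1}\geq h$), the recursion gives $x_{n+1}\leq x_ne^{r-h}+h$ with $e^{r-h}<1$, so $\limsup x_n\leq h/(1-e^{r-h})$ and your compact box exists. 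You should also note (as the choice of $a$ requires in either approach) that the lower bound $a$ must exceed $r$ for $be^{r-a}+h<b$ to be solvable in $b$; this is again harmless precisely because all iterates eventually lie above $h>r$. With these two observations supplied, your argument is complete and equivalent in strength to the paper's.
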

\begin{proof}
When $r<r_\infty,$ the equation $G(\xi)=\xi$ has a unique solution. Let $a<b$ and consider $A=(a,\ldots,a,b)$ while $B$ has the components of $A$ switched, i.e., $a\leftrightarrow b.$ We have
$(A,B)<_{\lambda}G(A,B)$ iff
$$a<ae^{r-b}+h\quad \text{and}\quad be^{r-a}+h<b.$$
Let the set of feasible solutions be $\Omega.$ $\Omega$ is not empty when $r<r_\infty$ (cf. \cite{Al-Ka2023}). Furthermore, for each initial condition $X_0=(x_0,\ldots,x_{-k})$ there exists $(a,b)\in\Omega$ such that
$(A,B)<_{\lambda}(X_0,X_0)<_{\lambda}(B,A)$ and $(A,B)<_{\lambda}G(A,B).$ By Theorem \ref{Th-GlobalStability}, $\bar x_h$ is a global attractor of Eq. \eqref{Eq-RickerExample}.
\end{proof}
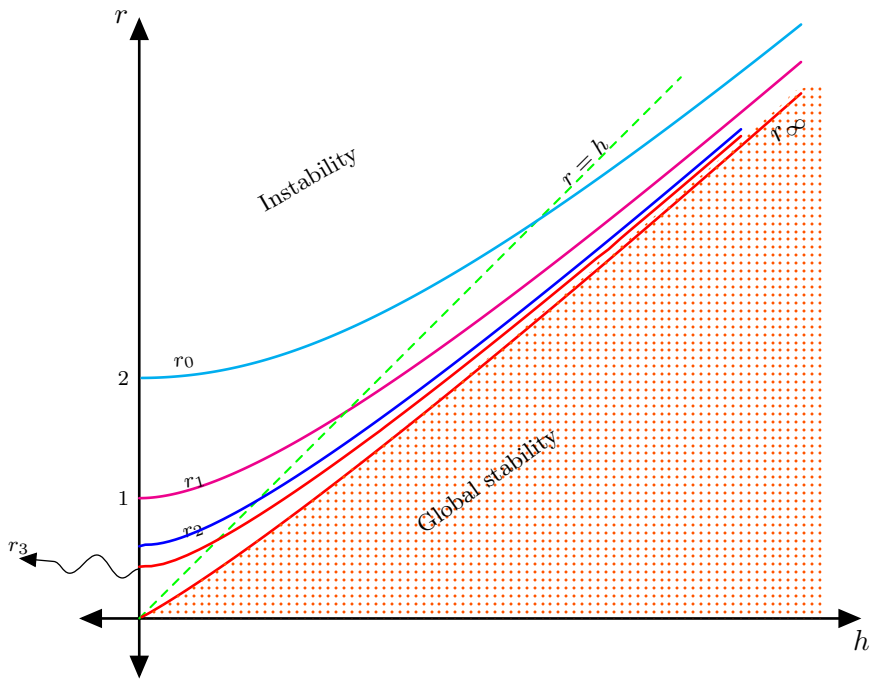
\begin{figure}[H]
\begin{center}
\definecolor{MyMaroon}{rgb}{128,0,0}
\definecolor{MyOrange}{rgb}{255,87,51}
\definecolor{ffvvqq}{rgb}{1,0.3333333333333333,0}
\begin{tikzpicture}[line cap=round,line join=round,>=triangle 45,x=1.0cm,y=1.0cm,scale=1.6]
%\draw[thin,color=gray,step=1.0cm, dashed,scale=1.0] (0,0) grid (7.,7.);
\draw[-triangle 45, line width=1.0pt,scale=1] (0,0) -- (6.0,0) node[below] {$h$};
\draw[line width=1.0pt,-triangle 45] (0,0) -- (-0.5,0);
\draw[-triangle 45, line width=1.0pt,scale=1] (0,0) -- (0,5.0) node[left] {$r$};
\draw[line width=1.0pt,-triangle 45] (0,0) -- (0.0,-0.5);
\draw[line width=1pt,color=magenta,smooth,samples=100,domain=0:5.5] plot(\x,{\x+1-ln(\x+1)});
\draw[line width=0.8pt,color=green,dashed,domain=0:4.5] plot(\x,{\x});
\draw[line width=1.0pt,color=cyan,smooth,samples=100,domain=0.02:5.5] plot(\x,{ln((\x*\x+4)^0.5-\x+2)-ln(\x++2+(\x*\x+4)^0.5)+\x/2+(\x*\x+4)^0.5/2+1});
\draw[line width=1pt,color=MyMaroon,smooth,samples=100,domain=0.02:5.5] plot(\x,{ln((\x*\x+4*\x)^0.5-\x)-ln(\x+(\x*\x+4*\x)^0.5)+\x/2+(\x*\x+4*\x)^0.5/2});
\draw[scale=1] (0.0,2.0) node[left,rotate=0] {\scriptsize $2$};
\draw[scale=1] (0.4,2.0) node[above,rotate=10] {\scriptsize $r_0$};
\draw[scale=1] (0.5,1.0) node[above,rotate=15] {\scriptsize $r_1$};
\draw[scale=1] (0.5,0.6) node[above,rotate=20] {\scriptsize $r_2$};
\draw[line width=0.5pt,-triangle 45,decorate, decoration={
    snake,
    segment length=20,
    amplitude=3.9,post=lineto,
    post length=10pt
}]  (0.0,0.41) -- (-1,0.5);
\draw[scale=1] (-1,0.45) node[above,rotate=0] {\scriptsize $r_3$};
\draw[line width=1.0pt,color=blue,smooth]
(0.0,0.6)--(0.05,0.613198)--(0.1,0.614348)--(0.15,0.619992)--(0.2,0.629162)--(0.25,0.641194)--(0.3,0.655605)--(0.35,0.672036)--(0.4, 0.690209)--(0.5,0.730946)--(0.6,0.77651)--(0.7,0.826004)--(0.8,0.878782)--(0.85,0.906245)--(0.95, 0.963071)--
(1.,0.992351)--(1.1,1.05247)--(1.2,1.11447)--(1.3,1.17815)--(1.4,1.24335)--(1.5,1.30993)--(1.6,1.37777)--(1.7,1.44676)--(1.75,1.48166)--(1.8,1.51682)--(1.9,1.58786)--(1.95,1.62373)--(2.,1.65982)--(2.1,1.73263)--(2.15,1.76933)--(2.2,1.80623)--
(2.3,1.88059)--(2.35,1.91803)--(2.4,1.95565)--(2.5,2.03137)--(2.6,2.10771)--(2.7,2.18465)--(2.8,2.26214)--(2.9,2.34017)--(3.,2.4187)--(3.1,2.49772)--(3.2,2.57719)--(3.3,2.6571)--(3.4,2.73743)--(3.5,2.81816)--(3.6,2.89927)--(3.7,2.98075)--(3.8,3.06259)--
(3.9,3.14476)--(3.95,3.18597)--(4.,3.22726)--(4.1,3.31008)--(4.15,3.3516)--(4.2,3.39319)--(4.3,3.4766)--(4.35,3.51841)--(4.4,3.56029)--(4.45,3.60224)--(4.5,3.64425)--(4.6,3.72848)--(4.7,3.81295)--(4.8,3.89768)--(4.9,3.98264)--(5.,4.06783);
\draw[line width=1.0pt,color=red,smooth]
(0,0.43)--(0.05,0.434366)--(0.1,0.434642)--(0.2,0.453133)--(0.3,0.485607)--(0.4,0.526698)--(0.5,0.57379)--(0.6,0.625383)--(0.7,0.680525)--(0.8,0.738569)--(0.9,0.799049)--(1.,0.861615)--(1.1,0.925998)--(1.2,0.991986)--(1.3,1.0594)--(1.4,1.12811)--(1.5, 1.19798)--(1.6,1.26893)--(1.7,1.34085)--(1.8,1.41367)--(1.9,1.48734)--(2.,1.56)--(2.05,1.599)--(2.1,1.63695)--(2.2,1.71281)--(2.3,1.7893)--(2.4,1.8664)--(2.45,1.90517)--(2.5,1.94407)--(2.6,2.02228)--(2.7,2.101)--(2.8,2.1802)--(2.9,2.25987)--
(3.,2.33997)--(3.1,2.42049)--(3.2,2.50142)--(3.3,2.58273)--(3.4,2.6644)--(3.5,2.74642)--(3.6,2.82879)--(3.7,2.91147)--(3.75,2.95294)--(3.8,2.99447)--(3.9,3.07)--(4.,3.16)--(4.1,3.24523)--(4.2,3.32936)--(4.3,3.41376)--(4.4,3.4984)--(4.45,3.54082)--(4.5,3.58329)--
(4.6,3.66841)--(4.7,3.75376)--(4.8,3.83933)--(4.9,3.92511)--(5.,4.0111);
%\draw[line width=0.4pt,dash pattern=on 1pt off 1pt,color=ffvvqq,fill=ffvvqq,fill opacity=0.15]
\fill[dotted,scale=0.9,color=ffvvqq,fill=ffvvqq,pattern=dots,pattern color=ffvvqq] (6.0,4.86)--(6.0,4.83)--(5.99,4.80)--(5.955,4.7699)--(5.92,4.7389)--(5.88,4.7078)--(5.85,4.6768)--(5.8152,4.6458)--
(5.78016,4.6148)--(5.745,4.5838)--(5.71,4.552)--(5.675,4.5219)--(5.6399,4.4909)--(5.60494,4.46)--
(5.57,4.429)--(5.53,4.398)--(5.4998,4.367)--(5.46,4.34)--(5.4297,4.3057)--(5.39468,4.2749)--
(5.3596,4.244)--(5.32,4.21)--(5.289,4.1826)--(5.2545,4.15)--(5.219,4.12)--(5.1844,4.090471141460904)--
(5.149,4.06)--(5.114,4.029)--(5.0792,3.9985)--(5.04,3.9678)--(5.0,3.937)--(4.97,3.9)--
(4.939,3.876)--(4.9,3.845)--(4.869,3.81)--(4.83,3.78)--(4.79,3.75)--(4.76,3.72)--
(4.7288,3.693)--(4.69,3.66)--(4.6587,3.63)--(4.62,3.6)--(4.588679,3.57)--(4.5536358173076925,3.54)--
(4.52,3.5)--(4.4835,3.48)--(4.4485,3.45)--(4.41,3.42)--(4.378,3.39)--(4.34,3.36)--
(4.31,3.33)--(4.27,3.2997)--(4.24,3.3)--(4.2,3.239)--(4.168,3.2)--(4.1,3.2)--
(4.098,3.15)--(4.063,3.119)--(4.03,3.089)--(3.99,3.0596)--(3.95,3.0297)--(3.9,2.9998)--
(3.88,2.97)--(3.85,2.94)--(3.8177,2.91)--(3.78,2.88)--(3.747,2.85)--(3.7,2.82)--
(3.6775,2.79)--(3.64,2.76)--(3.607,2.73)--(3.57,2.7028)--(3.537,2.67)--(3.502,2.64)--
(3.467,2.6143)--(3.43,2.58)--(3.397,2.55)--(3.362,2.53)--(3.34,2.497)--(3.29,2.467)--
(3.257,2.438)--(3.22198,2.40888)--(3.1869,2.379)--(3.15,2.35)--(3.1168,2.32)--(3.0818,2.29)--
(3.04676,2.263)--(3.012,2.234)--(2.976,2.205)--(2.941,2.176)--(2.90,2.147)--(2.87,2.12)--
(2.84,2.0896)--(2.8,2.06)--(2.766,2.03)--(2.731,2.0034)--(2.696,1.97)--(2.66,1.946)--
(2.626,1.91755)--(2.59,1.889)--(2.556,1.86)--(2.52,1.83)--(2.486,1.8037)--(2.451,1.775)--(2.42,1.747)--(2.38,1.7188)--(2.35,1.69)--(2.31,1.66)--(2.275,1.63)--(2.24,1.606)--
(2.2057,1.578)--(2.17,1.55)--(2.1356,1.52)--(2.10,1.49466)--(2.0655,1.46688)--(2.03,1.439)--
(1.995,1.4115)--(1.96,1.3839)--(1.925,1.356)--(1.89,1.328889)--(1.855,1.30)--(1.82,1.274)--
(1.785,1.2468)--(1.75,1.2196)--(1.715,1.19)--(1.68,1.165)--(1.645,1.138)--(1.61,1.1114)--
(1.5749,1.0845)--(1.54,1.0577)--(1.5048,1.031)--(1.4698,1.004)--(1.434758,0.9778)--(1.3997,0.95)--
(1.36467,0.9249)--(1.3296,0.8986)--(1.29,0.87)--(1.259,0.846)--(1.224,0.82)--(1.189,0.794)--
(1.154,0.768)--(1.119,0.74)--(1.084,0.7169)--(1.049,0.69)--(1.014,0.666)--(0.979,0.64)--
(0.944,0.615)--(0.92,0.59)--(0.87,0.565)--(0.839,0.54)--(0.8,0.52)--(0.7689,0.49)--
(0.73,0.466)--(0.6988,0.44)--(0.66,0.42)--(0.63,0.39)--(0.594,0.37)--(0.558,0.346)--
(0.52,0.32)--(0.488,0.299)--(0.45,0.276)--(0.418,0.253)--(0.38,0.23)--(0.35,0.21)--
(0.31,0.185)--(0.278,0.163)--(0.243,0.141)--(0.208,0.1197)--(0.173,0.098)--(0.138,0.077)--
(0.103,0.057)--(0.068,0.0369)--(0.033,0.0)--(0,0)--(6.0,0)--(6.3,0)--(6.3,4.95)--(6.0,4.86);
\draw[scale=1] (3.0,1.0) node[above,rotate=35] {\footnotesize Global stability};
%\draw[scale=1] (6.0,6.6) node[above,rotate=39] {\footnotesize $r=r_1$};
\draw[scale=1] (5.3,4.2) node[below,rotate=39] { $r_\infty$};
\draw[scale=1] (0,1) node[left,rotate=0] {\scriptsize $1$};
\draw[scale=1] (3.8,3.7) node[above,rotate=45] {\footnotesize $r=h$};
\draw[scale=1] (1.5,3.5) node[above,rotate=30] {\footnotesize Instability};
\end{tikzpicture}
\end{center}
\caption{This figure shows the stability regions in the $(h,r)-$plane for several choices of the delay. The main curve is the bottom red curve representing $r=r_\infty,$ which shows the global stability region obtained by our theory.  The curves from top to bottom are as follows: The curve $r=r_0$  represents the boundary of the local and global stability regions when no delay is involved in the model, i.e., $k=0.$ The curve $r=r_1$ represents the boundary of the local stability region when the delay is $k=1.$ The curve $r=r_2$  represents the boundary of the local stability region when the delay is $k=2.$ The curve  $r=r_2$ can be found explicitly. The curve $r=r_3$  represents the boundary of the local stability region when the delay is $k=3.$ This curve is found numerically.  The curve  $r=r_\infty$ represents the boundary of the global stability region that is found based on our theory for any finite value of $k.$}\label{Fig-StabilityRegions1}
\end{figure}

\subsection{Rational difference equations}
Consider the rational difference equation
\begin{equation} \label{Eq-RationalExample}
x_{n+1}=F(x_n,\ldots,x_{n-k+1})=\frac{a_0+\sum_{j=0}^{k-1}a_{j+1}x_{n-j}}{b_0+\sum_{j=0}^{k-1}b_{j+1}x_{n-j}},
\end{equation}
where $a_0=b_0=1$ and, the initial conditions and the coefficients are all nonnegative real numbers. Here, we have $F:\;\mathbb{R}^k_+\to  \mathbb{R}_+.$ Eq. \eqref{Eq-RationalExample} covers a wide spectrum of rational difference equations \cite{Ca-La2008,Ko-La1993,Ku-La2002}.  Define
$$A:=\sum_{j=1}^{k}a_j\quad \text{and}\quad B:=\sum_{j=1}^{k}b_j.$$
To avoid the trivial case, we consider that not all coefficients are identically zero. Also, the linear case (i.e., $B=0$) is an interesting case that we consider elsewhere. Here, we assume $B\neq 0.$
Next, we investigate the equilibrium solutions of Eq. \eqref{Eq-RationalExample}. The equilibrium solutions are fixed points of the function
\begin{equation}\label{Eq-FixedPoints}
    y=L(x)=\frac{1+Ax}{1+Bx},
\end{equation}
which are the zeros of
\begin{equation}\label{Eq-Roots-p(x)}
    p(x)=1+(A-1)x-Bx^2.
\end{equation}
We have a unique positive equilibrium solution, as shown by
$$\bar y=\frac{A-1+\sqrt{(A-1)^2+4B}}{2B},\quad \text{where}\quad B\neq 0.$$
Now, we proceed to guarantee that $F$ is monotonic in each component. Define
 \begin{equation}\label{Eq-Dij}
 D_{i,j}:=det\left(\left[
                     \begin{array}{cc}
                       a_i & a_{j\bmod k} \\
                       b_i & b_{j\bmod k} \\
                     \end{array}
                   \right]\right).
 \end{equation}
The subsequent facts summarize possible scenarios that are needed in the sequel and are straightforward to verify.
 \begin{proposition}\label{Pr-MonotonicityOfF}
Consider Eq. \eqref{Eq-RationalExample}, and let $D_{i,j}$ be defined as in Eq. \eqref{Eq-Dij}. If $D_{i,j}\geq 0$ for each $j=1,2,\ldots,k,$ then $F$ is non-decreasing in its $ith$ component. Similarly, if
 $D_{i,j}< 0$ for each $j=1,2,\ldots,k,$ then $F$ is non-increasing in its $ith$ component.
\end{proposition}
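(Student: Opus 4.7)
The plan is to verify the monotonicity claim by direct differentiation, reducing the question to the sign of a linear function of the arguments whose coefficients are exactly the $D_{i,j}$'s. Write $F=N/D$ with $N=a_0+\sum_{j=1}^{k}a_j y_j$ and $D=b_0+\sum_{j=1}^{k}b_j y_j$, where $y_1,\ldots,y_k$ denote the arguments of $F$. Since all coefficients are non-negative and $b_0=1$, we have $D\geq 1>0$ on $\mathbb{R}_+^k$, so $F$ is smooth there and it suffices to analyze the sign of $\partial F/\partial y_i$.

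Next, I would carry out the one-line computation
$$\frac{\partial F}{\partial y_i}=\frac{a_i D-b_i N}{D^2},$$
and expand the numerator using $a_0=b_0=1$:
$$a_i D-b_i N=(a_i-b_i)+\sum_{j=1}^{k}(a_i b_j-a_j b_i)\,y_j.$$
Because $D^2>0$, the monotonicity of $F$ in $y_i$ is controlled entirely by this expression.

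The remaining bookkeeping is to match each term to the determinants $D_{i,j}$ of Eq.~\eqref{Eq-Dij}. The coefficient of $y_j$ for $j=1,\ldots,k-1$ is precisely $D_{i,j}$, while the constant term $a_i-b_i=a_i b_0-a_0 b_i$ is $D_{i,k}$ under the convention $k\bmod k=0$ built into the definition of $D_{i,j}$. The diagonal term $j=i$ vanishes automatically since $D_{i,i}=0$. Consequently $a_i D-b_i N$ is a linear combination of $1$ and the $y_j$'s with coefficients drawn from $\{D_{i,j}\}_{j=1}^{k}$.

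If every $D_{i,j}\geq 0$, then since $y_j\geq 0$ the numerator is non-negative, so $\partial F/\partial y_i\geq 0$ on $\mathbb{R}_+^k$ and $F$ is non-decreasing in its $i$-th argument. Reversing inequalities throughout gives the non-increasing case. No real obstacle is anticipated; the proof is a short calculation, and the only care needed is lining up the modular indexing in $D_{i,j}$ with the constant and linear terms of $a_i D-b_i N$.
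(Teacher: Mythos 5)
Your differentiation step is surely the intended verification (the paper offers no written proof, declaring the claim ``straightforward to verify''), and the identity $\partial F/\partial y_i=(a_iD-b_iN)/D^2$ with
$a_iD-b_iN=(a_ib_0-a_0b_i)+\sum_{j=1}^{k}(a_ib_j-a_jb_i)\,y_j$
is correct. The gap is precisely in the bookkeeping you flagged as the ``only care needed'': the family $\{D_{i,j}\}_{j=1}^{k}$ of Eq.~\eqref{Eq-Dij} does \emph{not} contain all the coefficients of this linear form. As $j$ runs through $\{1,\ldots,k\}$, the index $j\bmod k$ runs through $\{1,\ldots,k-1,0\}$, so $\{D_{i,j}\}_{j=1}^{k}$ consists of the constant term $a_ib_0-a_0b_i$ together with the coefficients of $y_1,\ldots,y_{k-1}$ only; the coefficient of $y_k$, namely $a_ib_k-a_kb_i$, is never of the form $D_{i,j}$ when $i\neq k$. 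Hence your assertion that the numerator has ``coefficients drawn from $\{D_{i,j}\}_{j=1}^{k}$'' is false, and the sign conclusion does not follow from the stated hypothesis.

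This is not a cosmetic slip. Take $k=2$, $a=(1,1,10)$, $b=(1,1,0)$: then $D_{1,1}=0$ and $D_{1,2}=a_1b_0-a_0b_1=0$, so the hypothesis ``$D_{1,j}\geq 0$ for $j=1,2$'' holds, yet $F(y_1,y_2)=(1+y_1+10y_2)/(1+y_1)$ has $\partial F/\partial y_1=-10y_2/(1+y_1)^2<0$ for $y_2>0$, so $F$ is not non-decreasing in its first argument. The statement is only correct if the hypothesis is understood to control \emph{all} the minors $a_ib_j-a_jb_i$ for $j\in\{0,1,\ldots,k\}\setminus\{i\}$, which is how the paper's Example~\ref{Ex-RationalExamples} implicitly uses it. To close the argument you must either add the missing sign condition on $a_ib_k-a_kb_i$ explicitly, or state that you are reading the indexing in \eqref{Eq-Dij} in that corrected sense; as written, the final deduction does not go through.
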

Observe that $D_{i,j}=-D_{j,i},$ which forces a certain relationship between the coefficients. For instance, to have $F$ increasing in its $i$th and $i^*$th arguments, then the vectors $v_1=(a_i,b_i)$ and $v_2=(a_{i^*},b_{i^*})$ must be linearly dependent.  We give the following example to show some viable monotonicity options:

\begin{example}\label{Ex-RationalExamples}\rm
Consider Eq. \eqref{Eq-RationalExample} with delay 3 and $B\neq 0.$
\begin{description}
\item{(i)} $F(x,y,z)=\frac{1}{1 + x + y + z}$ is decreasing in all arguments
\item{(ii)} $F(x,y,z)=\frac{1 + 3x + 6y + z}{1 + 2x + 4y + 30z}$ satisfied $F(\uparrow,\uparrow,\downarrow)$
\item{(iii)} $F(x,y,z)=\frac{1 + 3x + 6y + 6z}{1 + 2x + 4y + 4z}$ is increasing in all arguments
\item{(iv)} $F(x,y,z)=\frac{1 + 3x}{1 + 2x + 4y + 2z}$ satisfies $F(\uparrow,\downarrow,\downarrow)$
\item{(v)}   $F(x,y,z)=\frac{1 + 3x + 3z}{1 + 2x + 4y + 2z}$ satisfies $F(\uparrow,\downarrow,\uparrow).$
\end{description}
\end{example}
It is clear now that all monotonicity options are possible, and based on this, we proceed to give our global stability result.
Define
\begin{equation}\label{Eq-Gamma0AndGamma1}
\Gamma_j=\{i:\; (-1)^jF\;\; \text{is increasing in its ith argument}\}.
\end{equation}
\begin{proposition}\label{Pr-RationalArtificial1}
Define the partial order:
$\leq_\tau$ as given in Definition \ref{Def-Poset} to be compatible with the monotonicity of $F.$ Let $G:\; \mathbb{R}_+^k\times \mathbb{R}_+^k\to  \mathbb{R}_+^k\times \mathbb{R}_+^k$ be the diagonal extension of $F.$ If $\Gamma_0$ or $\Gamma_1$ is empty, then the only fixed point of $G$ is $\bar \xi=(\bar y,\ldots,\bar y).$
\end{proposition}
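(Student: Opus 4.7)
The plan is to apply Lemma \ref{Eq-FixedPointsInGeneral}, which characterizes the fixed points of $G_\tau$ as exactly those pairs $(P_\tau, P_\tau^t)$ whose two components $x$ and $y$ satisfy the system $(F(P_\tau), F(P_\tau^t)) = (x,y)$ or $(y,x)$, depending on the sign of $\tau(1)$. The hypothesis that $\Gamma_0$ or $\Gamma_1$ is empty forces $\tau$ to be \emph{constant}, which collapses $P_\tau$ and $P_\tau^t$ into constant vectors and reduces the system to a pair of single-variable equilibrium equations for $F$; uniqueness of the positive equilibrium $\bar y$ then closes the argument.

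Concretely, I split into two cases. First, if $\Gamma_1 = \emptyset$, then $F$ is non-decreasing in every argument, so $\tau \equiv 1$. By definition \eqref{Eq-Ptau}, $P_\tau = (x,\ldots,x)$ and $P_\tau^t = (y,\ldots,y)$, and since $\tau(1) = 1$, Lemma \ref{Eq-FixedPointsInGeneral} gives the system
\[
F(x,\ldots,x) = x, \qquad F(y,\ldots,y) = y.
\]
Thus both $x$ and $y$ are equilibria of Eq. \eqref{Eq-RationalExample}, i.e.\ fixed points of the map $L$ in \eqref{Eq-FixedPoints}. Since $p(x) = 1 + (A-1)x - Bx^2$ has exactly one nonnegative root, namely $\bar y$, we must have $x = y = \bar y$, so the only fixed point of $G$ is $(\bar y,\ldots,\bar y)$.

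Second, if $\Gamma_0 = \emptyset$, then $F$ is non-increasing in every argument, so $\tau \equiv -1$. Now $P_\tau = (y,\ldots,y)$ and $P_\tau^t = (x,\ldots,x)$, and since $\tau(1) = -1$, Lemma \ref{Eq-FixedPointsInGeneral} gives
\[
F(y,\ldots,y) = y, \qquad F(x,\ldots,x) = x,
\]
again forcing $x$ and $y$ to be equilibria of $F$. Uniqueness of the nonnegative equilibrium $\bar y$ yields $x = y = \bar y$, hence $\bar\xi = (\bar y,\ldots,\bar y)$ is the unique fixed point of $G$.

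There is no real obstacle here: once Lemma \ref{Eq-FixedPointsInGeneral} is invoked, the only substantive input is that $L$ has a unique nonnegative fixed point, which is immediate from the quadratic $p(x)$ (its constant term is $1$ and the product of its roots is $-1/B < 0$, ruling out a second nonnegative root). The conceptual point worth emphasizing is that pseudo-fixed points of $G$ can only arise when $\tau$ is nonconstant, because only then can the two entries $x \neq y$ coexist in the same vector $P_\tau$ without collapsing the equations into ordinary equilibrium equations for $F$.
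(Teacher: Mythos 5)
Your Case 1 ($\Gamma_1=\emptyset$, so $F$ is non-decreasing in every argument) is correct and coincides with the paper's argument. The gap is in Case 2. When $\tau\equiv -1$, the fixed-point equation $G(\xi)=\xi$ does \emph{not} decouple into two equilibrium equations; it is the coupled ``two-cycle'' system
\[
L(x)=y,\qquad L(y)=x,\qquad L(t)=\frac{1+At}{1+Bt},
\]
which is the system the paper actually solves. You can see this directly from Definition \ref{Def-Gtau}: for $k=1$ and $F(\downarrow)$ one has $G(x,u)=(F(u),F(x))$, whose fixed points are precisely the $2$-cycles of $F$, not pairs of fixed points of $F$; in general, for $\tau\equiv -1$ the fixed-point equation forces $X=(a,\ldots,a)$ and $U=(b,\ldots,b)$ with $a=F(U)$ and $b=F(X)$. (The statement of Lemma \ref{Eq-FixedPointsInGeneral} for $\tau(1)=-1$, read literally together with \eqref{Eq-Ptau}, does produce your decoupled system, but that reading is inconsistent with the definition of $G_\tau$ --- the $k=1$ check above shows it --- so you cannot lean on it here.) Note that your argument, if valid, would show that \emph{no} map decreasing in all arguments has pseudo-fixed points, which is false: any decreasing $F$ whose diagonal map $t\mapsto F(t,\ldots,t)$ has a genuine $2$-cycle gives a non-diagonal fixed point of $G$. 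Your closing ``conceptual point'' fails for the same reason.

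The proposition is still true, but closing Case 2 requires one more step: from $y(1+Bx)=1+Ax$ and $x(1+By)=1+Ay$, subtraction gives $(x-y)(1+A)=0$, hence $x=y$ since $A\geq 0$, and then $x=y=\bar y$ by the uniqueness of the positive root of $p$ that you already established. (Equivalently: $L$ is a M\"obius map with trace $A+1\neq 0$, hence not an involution, so $L\circ L$ has no fixed points beyond those of $L$.) The paper asserts this step without detail, but it is the substantive content of the decreasing case.
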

\begin{proof}
If $D_{i,j}\geq 0$ for all $i$ and $j,$ then $G(\xi)=\xi$ leads to solving the equations
$$x=\frac{1+xA}{1+xB}\quad \text{and}\quad y=\frac{1+yA}{1+yB}.$$
The only solution is $x=y=\bar y.$
If $D_{i,j}\leq 0$ for all $i$ and $j,$ then $G(\xi)=\xi$ leads to solving the equations
$$\frac{1+Ax}{1+Bx}=y\quad \text{and}\quad x=\frac{1+Ay}{1+By}.$$
Again, here, the only solution is $x= y=\bar y.$
\end{proof}
\begin{proposition}\label{Pr-RationalArtificial2}
Suppose neither $\Gamma_0$ nor $\Gamma_1$ is empty. Let $A_i:=\sum_{i\in\Gamma_i}a_i,$  $  B_i:=\sum_{i\in\Gamma_i}b_i,$ $\hat{A}:=A_0-A_1-1,$ $\tilde{B}=B_0-B_1$, $\beta=\frac{\hat{A}}{2B_0}$ and $B^*:=\frac{B_0(4B_0+4A_1\hat{A}+\hat{A}^2)}{\hat{A}^2}.$
Define the partial order $<_\tau$ as given in Definition \ref{Def-Poset} to be compatible with the monotonicity of $F$. Let $G:\; \mathbb{R}_+^k\times \mathbb{R}_+^k\to  \mathbb{R}_+^k\times \mathbb{R}_+^k$ be the extension of $F.$  Each of the following holds true:
\begin{description}
\item{(I)} $G$ has a unique fixed point if one of the following is satisfied:
\begin{description}
\item{(i)} $B_1\leq B_0$
\item{(ii)} $B_1>B_0$ and $ \hat{A}\leq 0$
 \item{(iii)} $B_1>B_0,$ $ \hat{A}> 0$ and $B_1\leq B^*.$
\end{description}
\item{(II)}  If $B_1>B_0,$ $\hat{A}>0$ and $B_1>B^*,$ then $G$ has three fixed points  $(P_\tau,P_\tau^t),$ where $P_\tau$ as defined in Eq. \eqref{Eq-Ptau} for $(x,y)\in(\bar y,\bar y), (t_0,t_1),(t_1,t_0)$ and
$$t_i=\beta-(-1)^i\sqrt{\frac{1}{\tilde{B}}\left(\tilde{B}\beta^2+2\beta A_1+1\right)}.$$
\end{description}
\end{proposition}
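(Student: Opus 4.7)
The plan is to apply Lemma \ref{Eq-FixedPointsInGeneral} to reduce the fixed-point equation $G(\xi)=\xi$ to a polynomial system in two scalars $x,y$, and then analyze exactly when this system admits a non-diagonal (pseudo) solution in the positive quadrant. Throughout, I would work under the assumption $\tau(1)=1$; the opposite case $\tau(1)=-1$ is handled by an identical computation after the analogous relabelling. Substituting the rational form of $F$ into $F(P_\tau)=x$ and $F(P_\tau^t)=y$ and collecting monomials via the partition sums $A_0,A_1,B_0,B_1$ yields the system
\[
1+A_0 x+A_1 y = x(1+B_0 x+B_1 y),\qquad 1+A_0 y+A_1 x = y(1+B_0 y+B_1 x).
\]
Every fixed point of $G$ corresponds to a positive solution of this system, and the diagonal solutions $x=y$ recover the equilibrium $\bar y$ of $F$ and hence the fixed point $\bar\xi=(\bar y,\dots,\bar y)$.

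Next, subtracting the two equations and factoring out $(x-y)$ produces either $x=y$ or $x+y=\hat A/B_0=2\beta$. Parametrizing $x=\beta-s$, $y=\beta+s$ with $s>0$ (so $x<y$), adding the two equations and using $xy=\beta^{2}-s^{2}$, $x^{2}+y^{2}=2\beta^{2}+2s^{2}$, together with the identity $\hat A\beta=2B_0\beta^{2}$, collapses the system to the single scalar equation
\[
\tilde B\,s^{2}=\tilde B\beta^{2}+2A_1\beta+1.
\]
Hence every pseudo fixed point is forced to take the parametric form $t_i=\beta-(-1)^{i}\sqrt{(\tilde B\beta^{2}+2A_1\beta+1)/\tilde B}$ asserted in part~(II).

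The final step is to translate the admissibility requirements $t_0,t_1>0$ and $s\in\mathbb{R}\setminus\{0\}$ into the three inequalities of the statement. Positivity of both $\beta-s$ and $\beta+s$ forces $\beta>0$, equivalently $\hat A>0$, together with $s^{2}<\beta^{2}$; substituting the formula for $s^{2}$ turns the latter into $(1+2A_1\beta)/\tilde B<0$, and since $1+2A_1\beta>0$, this demands $\tilde B<0$, i.e., $B_1>B_0$. Reality of $s$ with $s\neq 0$ then requires $\tilde B\beta^{2}+2A_1\beta+1$ to share its sign with $\tilde B$, which under $\tilde B<0$ becomes $\tilde B\beta^{2}+2A_1\beta+1<0$; plugging in $\beta=\hat A/(2B_0)$ and clearing denominators reduces this to $B_1>B^{*}$. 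Thus pseudo fixed points exist if and only if $\hat A>0$, $B_1>B_0$, and $B_1>B^{*}$ all hold simultaneously. Negating each of these three conditions in turn yields, respectively, cases (ii), (i), and (iii) of part~(I); when all three hold, the two pseudo solutions $(x,y)=(t_0,t_1)$ and $(x,y)=(t_1,t_0)$ together with the diagonal one provide the three fixed points of part~(II), the swap $(x,y)\leftrightarrow(y,x)$ corresponding to the pairing $(P_\tau,P_\tau^t)\leftrightarrow(P_\tau^t,P_\tau)$ from the remark following Lemma \ref{Eq-FixedPointsInGeneral}.

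The principal obstacle is the algebraic bookkeeping in the last step: verifying that $\tilde B\beta^{2}+2A_1\beta+1<0$ is exactly $B_1>B^{*}$ requires careful substitution of $\beta=\hat A/(2B_0)$ and simplification, but it becomes transparent once one recognizes that $B^{*}$ is the unique value of $B_1$ at which the quadratic $\tilde B\beta^{2}+2A_1\beta+1$, viewed as an affine function of $B_1$ for fixed $\hat A$, $B_0$, and $A_1$, changes sign.
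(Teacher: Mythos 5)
Your proposal is correct and follows essentially the same route as the paper: both reduce $G(\xi)=\xi$ via Lemma \ref{Eq-FixedPointsInGeneral} to the two-variable rational system, extract the off-diagonal branch $x+y=\hat{A}/B_0$ by subtracting the two equations, and turn the remaining equation into a quadratic positivity condition whose analysis yields exactly the trichotomy (I)(i)--(iii) versus (II); your symmetric substitution $x=\beta-s$, $y=\beta+s$ is a reparametrization of the paper's single-variable quadratic $q$, with your direct necessary-and-sufficient check replacing the paper's Descartes'-rule case analysis. The only cosmetic gap is that writing $s^{2}=\beta^{2}+(2A_1\beta+1)/\tilde{B}$ presupposes $\tilde{B}\neq 0$; when $\tilde{B}=0$ the scalar equation reads $0=2A_1\beta+1>0$ directly (given $\beta>0$ from positivity), so that boundary subcase of (I)(i) is still excluded.
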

\begin{proof}
Suppose that for all values of $j,$  $D_{i,j}\geq 0$ for some values of $i$ while $D_{i,j}\leq 0$ for other values of $i.$ In this case,
$G(\xi)=\xi$ leads to solving

$$x=\frac{1+A_0x+A_1y}{1+B_0x+B_1y}\quad \text{and}\quad y=\frac{1+A_0y+A_1x}{1+B_0y+B_1x}, $$
or equivalently
\begin{equation}\label{Eq-ArtificialFixed1}
(A_1-B_1x)y=B_0x^2+(1-A_0)x-1\quad \text{and}\quad  (A_1-B_1y)x=B_0y^2+(1-A_0)y-1.
\end{equation}
Obviously, the two equations represent hyperbolas unless $\bar y=\frac{A_1}{B_1}.$ In this particular scenario, the two equations can be simplified to  $x=y=\bar y.$ Also,  $x=y=\bar y$ is a solution regardless of the value of $\bar y.$  Therefore, we proceed assuming that $x\neq y$ and the curves of Eqs. \eqref{Eq-ArtificialFixed1} are hyperbolas. We begin by considering the case $B_1=B_0.$ Substitute and re-write Eqs. \eqref{Eq-ArtificialFixed1} as
$$\begin{cases}
y=&-x+\frac{\hat{A}}{B_0}+\frac{A_1\hat{A}+B_0}{B_0(B_0x-A_1)}\\
x=&-y+\frac{\hat{A}}{B_0}+\frac{A_1\hat{A}+B_0}{B_0(B_0y-A_1)}.
\end{cases}
$$
By subtracting the two equations, we observe that $x$ cannot be different from $y.$ Next, consider the case $B_1<B_0.$
In this case, $x$ and $y$ must be positive zeros of the quadratic polynomial
$$q(x)=B_0(B_1-B_0)x^2-(B_1-B_0)\hat{A}x+A_1\hat{A}+B_0.$$
Based on the coefficients' signs, we use Descart's rule of signs to conclude that it is impossible to have both zeros of $p$ positive.
Next, assume $B_1> B_0.$ If $\hat{A}\leq 0,$ then $q(x)-q(y)=0$ gives $x+y=\frac{\hat{A}}{B_0}\leq 0.$ So, the two zeros of $q$ cannot be positive. Thus, we proceed with the $\hat{A}>0$ case.  Based on the signs of the coefficients of $q$, we obtain two positive zeros or none. Indeed, two positive zeros bifurcate from $\bar y$ when $B_1=B^*.$ Therefore, we obtain Part (iii) of Case (I) when $B_1\leq B^*$    and Case (II) when $B>B^*.$ It is a computational matter to find the explicit form of the fixed points, and we omit it.
\end{proof}
Note that Part (ii) of Example \ref{Ex-RationalExamples} can be used to illustrate Case (II) of Proposition \ref{Pr-RationalArtificial2}. Indeed, we have $A_0=9,B_0=6,A_1=1$ and $B_1=30.$ This gives us $\bar y=\frac{1}{3},$ and the other two solutions are $\left(\frac{1}{12},\frac{13}{12}\right)$ and $\left(\frac{13}{12},{1}{12}\right).$ Before we give the global stability result, we find it convenient to re-write Eqs. \eqref{Eq-ArtificialFixed1} differently. Define
\begin{align}\label{Eq-q2}
q_2(t)=& \frac{B_0t^2+(1-A_0)t-1}{A_1-B_1t},\quad t\neq \frac{A_1}{B_1}\\ \nonumber
=& -\frac{B_0}{B_1}t+\frac{\Delta}{B_1^2}-1-\frac{p\left(\frac{A_1}{B_1}\right)}{A_1-B_1t},
\end{align}
where $\Delta=A_0B_1-A_1B_0$ and $p$ is the polynomial defined in Eq. \eqref{Eq-Roots-p(x)}. This makes the geometric representation of the hyperbolas in Eqs.   \eqref{Eq-ArtificialFixed1} possible based on whether the equilibrium is larger or smaller than $\frac{A_1}{B_1}.$ However, we show that $\bar y<\frac{A_1}{B_1}$ is an invalid option.

\begin{lemma}\label{Pr-SimpleFacts}
Let $\Gamma_0$ and $\Gamma_1$ be nonempty, and consider $\Delta=B_1A_0-A_1B_0$. Suppose $B_0B_1\neq 0$. Each of the following holds true:
\begin{description}
\item{(i)} $\frac{A_0}{B_0}\geq \frac{A}{B}\geq \frac{A_1}{B_1}$
\item{(ii)} $\Delta\geq 0$
\item{(iii)} $A_0\geq B_0$ and $A_1\leq B_1$
\item{(iv)} $\bar y> \frac{A_1}{B_1}.$
\end{description}
\end{lemma}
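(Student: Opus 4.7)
The plan for the proof is to tackle the four parts in the order (ii), (i), (iii), (iv), with (iv) being the main technical step. The unifying thread is the monotonicity hypothesis on $F$, which yields sign constraints on the two-by-two determinants $a_i b_m - a_m b_i$. Because $a_0 = b_0 = 1$, the slot $m = 0$ reduces to $a_i - b_i$, and this will be the pivot for part (iii); summing over index sets will handle (ii); and (iv) will be deduced by evaluating the equilibrium polynomial at $A_1/B_1$.

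For (ii), I will sum the inequality $a_i b_j - a_j b_i \geq 0$ (which holds for $i \in \Gamma_0$ and $j \in \Gamma_1$ by the monotonicity pattern) over all pairs $(i,j) \in \Gamma_0 \times \Gamma_1$ to collapse the double sum to $A_0 B_1 - A_1 B_0 = \Delta$. Part (i) then follows from the algebraic identities $(A_0/B_0) - (A/B) = \Delta/(B_0 B)$ and $(A/B) - (A_1/B_1) = \Delta/(B B_1)$, combined with $\Delta \geq 0$ and $B_0,B_1,B > 0$. For (iii), I specialize to $m = 0$: the determinant $a_i\cdot 1 - 1\cdot b_i = a_i - b_i$ has the sign of the monotonicity in coordinate $i$, so $a_i \geq b_i$ when $i \in \Gamma_0$ and $a_i \leq b_i$ when $i \in \Gamma_1$; summing over the relevant index set yields $A_0 \geq B_0$ and $A_1 \leq B_1$.

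For (iv), I will evaluate $p(A_1/B_1)$ where $p(x) = 1 + (A-1)x - Bx^2$ is the polynomial whose unique positive root is $\bar y$. A short computation yields
\[
p(A_1/B_1) \;=\; \frac{B_1(B_1 - A_1) + A_1 \Delta}{B_1^2},
\]
which is non-negative by (ii) and (iii). Since $p$ is downward-opening with $p(0) = 1 > 0$, it is non-negative exactly between its two roots, and the negative root is excluded by $A_1/B_1 \geq 0$; hence $A_1/B_1 \leq \bar y$. The main obstacle will be promoting this to a strict inequality: equality would force both summands in the numerator to vanish, giving $A_1 = B_1$ (since $B_1 > 0$) and then $\Delta = 0$, from which the pointwise bounds of (iii) yield $A_0 = B_0$ and in fact $a_i = b_i$ for every $i \in \Gamma_0 \cup \Gamma_1$. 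This collapses $F$ to the constant $1$, a degenerate case that the paper tacitly excludes by considering nontrivial mixed-monotonic systems; outside this degeneracy, strict inequality is assured.
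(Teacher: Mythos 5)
Your proof is correct, and for parts (i)--(iii) it is essentially the paper's argument with the roles of (i) and (ii) swapped: the paper first sums the determinant inequalities $a_ib_j\geq a_jb_i$ over $j\neq i$ and then over $i\in\Gamma_0$ to get $A_0B\geq AB_0$ (hence (i), then (ii)), while you sum directly over $\Gamma_0\times\Gamma_1$ to get $\Delta\geq 0$ first and recover (i) from the identities $\frac{A_0}{B_0}-\frac{A}{B}=\frac{\Delta}{B_0B}$ and $\frac{A}{B}-\frac{A_1}{B_1}=\frac{\Delta}{BB_1}$; both routes also use the $j=0$ determinant $a_i-b_i$ for (iii), which the paper dismisses as ``obvious.'' For (iv) you and the paper compute the same quantity $p\left(\frac{A_1}{B_1}\right)=\frac{B_1(B_1-A_1)+A_1\Delta}{B_1^2}\geq 0$, but you diverge on strictness. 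The paper assumes $\bar y=\frac{A_1}{B_1}$, uses the monotonicity of $p$ on $[\frac{A_1}{B_1},\frac{A_0}{B_0}]$ to conclude $A\geq B$, and then ends with a garbled appeal to the curve $y=L(x)$ becoming constant; your version instead traces equality back through the two nonnegative summands to $A_1=B_1$, $\Delta=0$, hence $A_0=B_0$, and then (via the pointwise bounds behind (iii)) to $a_i=b_i$ for every $i$, i.e.\ $F\equiv 1$. This is cleaner and more honest than the paper's ending, and the degenerate case is genuinely excluded by the hypothesis that $\Gamma_0$ and $\Gamma_1$ are nonempty: if $a_i=b_i$ for all $i$ then every determinant $D_{i,j}$ vanishes, every partial derivative of $F$ is identically zero, and no index is (strictly) increasing or decreasing, so you may want to say this explicitly rather than calling it a tacit exclusion.
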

\begin{proof}
For $i\in \Gamma_0,$ we have $a_1b_j\geq b_ia_j$ for all $j\neq i.$ This gives us
$$a_i\sum_{j\neq i}b_j\geq b_i\sum_{j\neq i}a_j\quad \Leftrightarrow \quad a_iB\geq b_iA.$$
Sum over all $i\in \Gamma_0$ to obtain $A_0B\geq B_0A.$ Similarly, $A_1B\leq B_1A.$ This clarifies Part (i) and Part (ii). Part (iii) is obvious. Finally, to prove Part (iv), observe that Part (iv) is valid if and only if $p\left(\frac{A_1}{B_1}\right)\geq 0.$ Since $A_1\leq B_1$ and $\Delta\geq 0,$ we obtain
$$B_1(A_1-B_1)\leq A_1\Delta,$$
which is equivalent to
$$p\left(\frac{A_1}{B_1}\right)=\frac{A_1}{B_1^2}(\Delta-B_1)+1\geq 0.$$
Next, assume $\bar y= \frac{A_1}{B_1}.$ Since $p$ is decreasing on the positive real numbers, we obtain
$$\frac{A_1}{B_1}\leq \frac{A}{B}\leq \frac{A_0}{B_0}\;\;\Rightarrow\;
p\left(\frac{A_1}{B_1}\right)\geq p\left(\frac{A}{B}\right)\geq p\left( \frac{A_0}{B_0}\right).$$
Because $p\left(\frac{A_1}{B_1}\right)=0$ and $p\left(\frac{A}{B}\right)=1-\frac{A}{B}$, we obtain $A\geq B.$ But the curve of Eq. \eqref{Eq-FixedPoints} has $1$ as $y$-intercept and $y=\frac{A}{B}$ as the horizontal asymptote. The action converts the function into a constant value, which we intentionally avoided during the preliminary phase.
\end{proof}
\begin{theorem}
Consider Eq. \eqref{Eq-RationalExample}, and let $\Gamma_i$ as defined in Eq. \eqref{Eq-Gamma0AndGamma1}. Also, consider $A_i,B_i,\hat{A}$ and $B^*$ as defined in Proposition \ref{Pr-RationalArtificial2}. Each of the following holds true:
\begin{description}
\item{(I)} If $\Gamma_0$ or $\Gamma_1$ is empty, then $\bar y$ is a global attractor.
\item{(II)} Suppose neither $\Gamma_0$ nor $\Gamma_1$ is empty. Then $\bar y$ is a global attractor if one of the following conditions is satisfied:
\begin{description}
\item{(i)} $B_1\leq B_0$
\item{(ii)} $B_1>B_0$ and $ \hat{A}\leq 0$
 \item{(iii)} $B_1>B_0,$ $ \hat{A}> 0$ and $B_1\leq B^*.$
\end{description}
\end{description}
\end{theorem}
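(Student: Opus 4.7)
The plan is to reduce the statement, for both parts, to the corollary following Theorem \ref{Th-GlobalStability} by verifying three ingredients: $F$ is increasing with respect to an appropriate partial order $\leq_\tau$, the diagonal extension $G_\tau$ has only a diagonal fixed point (so $F$ has no pseudo-fixed points), and every $X_0 \in \mathbb{R}_+^k$ can be enclosed in a trapping box.

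By Proposition \ref{Pr-MonotonicityOfF}, defining $\tau(i) = 1$ for $i \in \Gamma_0$ and $\tau(i) = -1$ for $i \in \Gamma_1$ yields $F(\uparrow_\tau)$, and Proposition \ref{keyembedding} then ensures that the associated diagonal extension $G_\tau$ is increasing with respect to $\lambda = \tau \times \tau^\vee$ on $\mathbb{R}_+^k \times \mathbb{R}_+^k$. For Part (I), Proposition \ref{Pr-RationalArtificial1} directly asserts that the only fixed point of $G_\tau$ is $(\bar y, \ldots, \bar y)$. For Part (II), each of the conditions (i), (ii), (iii) matches exactly the hypothesis of Proposition \ref{Pr-RationalArtificial2}(I), producing a unique fixed point of $G_\tau$; since the diagonal is $G_\tau$-invariant and contains $(\bar y, \ldots, \bar y)$ (because $\bar y$ is a fixed point of $F$), uniqueness forces this to be the fixed point. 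Hence in both parts, $F$ has $\bar y$ as its only fixed point and admits no pseudo-fixed points.

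For the trapping box, given $X_0 = (x_0^1, \ldots, x_0^k) \in \mathbb{R}_+^k$, the containment $P_\tau <_\tau X_0 <_\tau P_\tau^t$ reduces to $x < \min_i x_0^i$ and $y > \max_i x_0^i$, which is trivially achievable. The remaining trapping condition $(x,y) <_\lambda (F(P_\tau), F(P_\tau^t))$ asks for $x < F(P_\tau)$ and $y > F(P_\tau^t)$. Writing these out as rational expressions in $x, y$ and sending $x \to 0^+$, $y \to \infty$, the limits approach $A_1/B_1$ and $A_0/B_0$ respectively in the mixed case; both are positive by Lemma \ref{Pr-SimpleFacts}, so for $x$ sufficiently small and $y$ sufficiently large the two inequalities become compatible. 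Invoking the corollary to Theorem \ref{Th-GlobalStability} then yields $\omega(X_0) = \{\bar y\}$, and since $X_0$ was arbitrary, $\bar y$ is a global attractor.

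The main obstacle I foresee is the trapping-box verification in Part (II) when $B_0$ or $B_1$ vanishes or when $\hat{A} \geq 0$: in such regimes, $F(P_\tau)$ or $F(P_\tau^t)$ is no longer bounded as $y \to \infty$, so the $(x,y)$ pair must be chosen in a coupled rather than independent fashion. The inequalities (i)--(iii) of Proposition \ref{Pr-RationalArtificial2}(I) are precisely what rules out the bifurcated pseudo-fixed points in Proposition \ref{Pr-RationalArtificial2}(II), and the same structural inequalities from Lemma \ref{Pr-SimpleFacts} should provide just enough room to pick a consistent $(x, y)$; making this quantitative across all degenerate sub-cases is the only non-routine step.
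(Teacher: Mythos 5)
Your overall strategy is the same as the paper's: reduce both parts to the corollary of Theorem \ref{Th-GlobalStability} by (a) fixing $\tau$ compatible with the monotonicity from Proposition \ref{Pr-MonotonicityOfF}, (b) invoking Propositions \ref{Pr-RationalArtificial1} and \ref{Pr-RationalArtificial2}(I) to rule out pseudo-fixed points, and (c) producing a trapping box around each $X_0$. Steps (a) and (b) are handled correctly and match the paper.

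The gap is in step (c) for Part (II), and it is not a removable technicality --- it is precisely the content of the paper's argument there. Your claim that sending $x\to 0^+$ and $y\to\infty$ independently makes $x<F(P_\tau)$ and $y>F(P_\tau^t)$ compatible only works when $A_1>0$ and $B_0>0$: if $A_1=0$ then $F(P_\tau)=\frac{1+A_0x}{1+B_0x+B_1y}\to 0$ as $y\to\infty$ for fixed $x$, so the first inequality fails unless $x$ shrinks roughly like $1/(B_1y)$; if $B_0=0$ then $F(P_\tau^t)$ grows linearly in $y$ and the second inequality requires $B_1x>A_0-1$, which fights against taking $x$ small. You name this obstacle but do not resolve it, and resolving it is exactly what the paper does: it rewrites the two trapping inequalities as the hyperbola system \eqref{In-ArtificialFixed1}, uses Lemma \ref{Pr-SimpleFacts}(iv) to place $\bar y>\frac{A_1}{B_1}$, uses conditions (i)--(iii) to guarantee the two hyperbolas meet only at the equilibrium, and then reads off from the asymptotes of $q_2$ in \eqref{Eq-q2} (see Figure \ref{Fig-Rational}) that the feasible region is unbounded in the direction needed, so that for every $X_0$ one can choose $(a,b)$ in that region with all coordinates of $X_0$ in $[a,b]$. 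Until you supply an argument of this kind --- showing the feasible set of the two coupled inequalities contains pairs $(x,y)$ with $x$ below and $y$ above any prescribed bounds --- Part (II) is not proved. A secondary, minor point: your appeal to ``$A_1/B_1$ and $A_0/B_0$ positive by Lemma \ref{Pr-SimpleFacts}'' is off --- the lemma gives $\bar y>\frac{A_1}{B_1}$ and the ordering of the ratios, not their strict positivity, and it assumes $B_0B_1\neq 0$.
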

\begin{proof}
(I) Suppose $\Gamma_1$ is empty, or equivalently, $D_{i,j}\geq 0$ for all $i$ and $j.$ This means $F$ is non-decreasing in each one of its components. Let $a<b$ and $\xi=(X,Y),$ where $X$ and $Y$ are composed of $k$ arguments with $a$ in each argument of $X$ and $b$ in each argument of $Y.$ Define the order $\leq_{\tau}$ to be compatible with the monotonicity of $F$ as in Definition \ref{Def-Poset}  and
$G(\xi)=(F(X),a,\ldots,a,F(Y),b,\ldots,b).$ From Proposition \ref{Pr-RationalArtificial2}, $G$ has a unique fixed point. Now, $\xi<_{\lambda}G(\xi)$ gives us
$$a<\frac{1+aA}{1+aB}\quad \text{and}\quad b>\frac{1+bA}{1+bB}.$$
The feasible set of solutions for these inequalities is $\{(a,b):\; 0<a<\bar y, b>\bar y\}$. Therefore, for each initial condition $X_0=(x_0,x_{-1},\ldots,x_{-k+1}),$ there exists $a<b$ such that $\xi<_{\lambda} (X_0,X_0)<_{\lambda}\xi^t.$ By Theorem \ref{Th-GlobalStability}, $G^n(\xi)$ and $G^n(\xi^t)$ converge to $\bar \xi=(\bar y,\ldots,\bar y)$. Therefore, the orbits of Eq. \eqref{Eq-RationalExample} converge to $\bar y.$
 Suppose $\Gamma_1$ is empty, or equivalently, Suppose $D_{i,j}\leq 0$ for all $i$ and $j.$ This means $F$ is non-increasing in each component. As before, let $a<b$ and $\xi=(X,Y),$ where $X$ and $Y$ are composed of $k$ arguments with $b$ in each argument of $X$ and $a$ in each argument of $Y.$ Define the order $\leq_{\tau}$ to be compatible with the monotonicity of $F,$  and define
$G(\xi)=(F(Y),b,\ldots,b,F(X),a,\ldots,a).$ Again here, Proposition \ref{Pr-RationalArtificial2} shows that $G$ has a unique fixed. Now, $\xi\leq_{\lambda}G(\xi)$ gives us
$$\frac{1}{1+aB}\leq b\quad \text{and}\quad a\leq \frac{1}{1+bB}.$$
The feasible set of solutions for these inequalities is unbounded, and for each initial condition $X_0=(x_0,x_{-1},\ldots,x_{-k+1}),$ there exists $a<b$ such that $\xi<_{\lambda} (X_0,X_0)<_{\lambda}\xi^t.$
By Theorem \ref{Th-GlobalStability}, $G^n(\xi)$ and  $G^n(\xi^t)$ converge to $\bar \xi=(\bar y,\ldots,\bar y)$. Therefore, the orbits of Eq. \eqref{Eq-RationalExample} converge to $\bar y.$

(II) Suppose neither $\Gamma_0$ nor $\Gamma_1$ is empty. Define the order $\leq_{\tau}$ to be compatible with the monotonicity of $F.$ Let $x<y$ and define $X=(x_1,\ldots,x_k),$ where $x_i=x$ if $F$ is increasing in its $ith$ component while $x_i=y$ if $F$ is decreasing in the $ith$ component. Let $Y=X^t,$ i.e., switch $x\leftrightarrow y$ in $X$ to obtain $Y.$  Let $\xi=(X,Y)$ and define $G$ as in Definition \ref{Def-Gtau}. We obtain $\xi\leq_\tau G(\xi)$ if
$x\leq F(X) $ and $F(Y)\leq y.$ This implies
$$x\leq \frac{1+A_0x+A_1y}{1+B_0x+B_1y}\quad \text{and}\quad y\geq \frac{1+A_0y+A_1x}{1+B_0y+B_1x}. $$
Therefore, we need to solve the system of inequalities $x<y$ and
\begin{equation}\label{In-ArtificialFixed1}
\begin{cases}
 (A_1-B_1x)y&>\;B_0x^2+(1-A_0)x-1\\
 (A_1-B_1y)x&<\;B_0y^2+(1-A_0)y-1.
\end{cases}
\end{equation}
Based on Lemma \ref{Pr-SimpleFacts}, we need to focus on $\bar y >\frac{A_1}{B_1}.$  The feasible region of the inequalities is determined by the boundary curves, which are two hyperbolas. Furthermore, Conditions  (i), (ii) or (iii) ensure the intersection takes place at the unique equilibrium point. Therefore, the asymptotes of the two hyperbolas are sufficient to determine an unbounded feasible region. Based on the function $q_2$ of Eq. \eqref{Eq-q2}, the asymptotes are
$$x=\frac{A_1}{B_1},\quad y=-\frac{B_0}{B_1}x+\frac{\Delta}{B_1^2}-1$$
and
$$y=\frac{A_1}{B_1},\quad x=-\frac{B_0}{B_1}y+\frac{\Delta}{B_1^2}-1.$$
Figure \ref{Fig-Rational} illustrates the feasible region in this case.
Therefore, for any initial condition $(x_0,\ldots,x_{1-k}),$ we can find a point $(a,b)$ in the feasible region such that $x_0,\ldots,x_{1-k}\in [a,b].$ The rest of the proof is the same as in Part (I).
\end{proof}
%%%%%%%%%%%%%%%%%%%%%%%%%%%%%%%%%%%%%%%%%%%%%%%%%%%%%%%%%%%%%%%%%%%%%%%%
%%%%%%%%%%%%%%%%%%%%%%%%%%%%%%%%%%%%%%%%%%%%%%%
\definecolor{ffqqqq}{rgb}{1.,0.,0.}
\definecolor{qqqqff}{rgb}{0,0,1}
\definecolor{zzttqq}{rgb}{0.6,0.2,0}
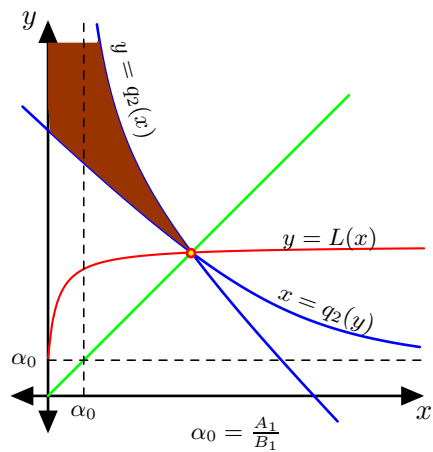
\begin{figure}[h!]
\centering
\begin{minipage}[t]{0.8\textwidth}
\raggedright
\begin{center}
\begin{tikzpicture}[line cap=round,line join=round,>=triangle 45,x=1.0cm,y=1.0cm,scale=0.5]
\draw[-triangle 45, line width=1.0pt,scale=1] (0,0) -- (10.0,0) node[below] {$x$};
\draw[line width=1.0pt,-triangle 45] (0,0) -- (-1,0);
\draw[-triangle 45, line width=1.0pt,scale=1] (0,0) -- (0,10) node[left] {$y$};
\draw[line width=1.0pt,-triangle 45] (0,0) -- (0.0,-1);
\draw[line width=1.0pt,domain=0:8.0,smooth,variable=\x,green] plot ({\x}, {\x)});
\draw[line width=0.8pt,color=red,smooth,samples=100,domain=0.0:10.0] plot(\x,{(1+16*\x)/(1+4*\x)});
\draw[line width=1pt,color=blue,smooth,samples=100,domain=1.3:7.7] plot(\x,{(2*\x*\x-14*\x-1)/(1-2*\x)});
\draw[line width=1pt,color=blue,smooth,samples=100,domain=1.3:7.7] plot({(2*\x*\x-14*\x-1)/(1-2*\x)},\x);
\fill[line width=2pt,color=zzttqq,fill=zzttqq,fill opacity=0.300]  (3.81,3.82)--(3.55,4.02)--(3.29,4.22)--(3.04,4.42)--(2.79,4.62)--
(2.55,4.82)--(2.31,5.02)--(2.07,5.22)--(1.84,5.42)--(1.61,5.62)--(1.38,	5.82)--(1.16,6.02)--(0.94,6.22)--(0.71,6.42)--(0.49,6.62)--(0.27,6.82)--
(0.06,7.02)--(0.0,7.707)--(0.0,9.4)--(1.3,9.4)--(1.35,9.5)--(1.55,8.52)--(1.75,7.75)--(1.95,7.14)--(2.15,6.62)--(2.35,6.18)--(
2.55,5.78)--(2.75,5.42)--(2.95,5.08)--(3.15,4.77)--(
3.35,4.47)--(3.55,4.18)--(3.75,3.90) -- cycle;
\draw[scale=1] (0,0.95) node[left] {\footnotesize $\alpha_0 $};
\draw[scale=1] (0.95,0) node[below] {\footnotesize $\alpha_0 $};
\draw[scale=1] (7.5,3.6) node[above] {\footnotesize $y=L(x) $};
\draw[scale=1] (7.2,1.75) node[above,rotate=-20] {\footnotesize $x=q_2(y) $};
\draw[scale=1] (1.7,8.0) node[above,rotate=-75] {\footnotesize $y=q_2(x) $};
\draw[line width=0.6pt,dashed,color=black] (0.95,0) -- (0.95,10);
\draw[line width=0.6pt,dashed,color=black] (0,0.95) -- (10.0,0.95);
\draw[line width=1.0pt,red,fill=yellow] (3.8,3.8) circle (3.0pt);
%\draw[line width=1.0pt,blue,fill=yellow] (2.73,5.00) circle (3.0pt);
%\draw[line width=1.0pt,blue,fill=yellow] (5.0,2.75) circle (3.0pt);
\draw[scale=1] (5,-0.3) node[below] {\footnotesize  $\alpha_0=\frac{A_1}{B_1}$};
\end{tikzpicture}
\end{center}
\end{minipage}%
\caption{This figure shows the feasible region of the inequalities in \eqref{In-ArtificialFixed1} when $x<y.$ The functions $L$ and $q_2$ are given in Eq. \eqref{Eq-FixedPoints} and Eq. \eqref{Eq-q2}, respectively.    }\label{Fig-Rational}
\end{figure}
%%%%%%%%%%%%%%%%%%%%%%%%%%%%%%%%%%%%%%%%%%%%%%%%%%%%%%%%%%%%%%%%%%%%%%%%%%%%%%%%%%%%%%%%%%%%%%%%%%%%%%%%%%%%%%%%%%%%%%%%%%%%%%
%\newpage

\section{Conclusion}
In this paper, we considered $k$-dimensional maps of mixed monotonicity and introduced compatible partial orders. Then, we embedded the $k$-dimensional system into monotonic $2k$-dimensional system. The orbits of the embedded system are used to squeeze the orbits of the original system. Subsequently, bounded monotonic sequences and their convergence are utilized to establish global stability. This conclusion was generalized to periodic difference equations of the form
$$x_{n+1}=F_n(x_n,x_{n-1},\ldots,x_{n-k+1}).$$
The novelty of our theory lies in its generality to tackle $k$-dimensional systems for any $k$.
As an application of our developed theory, we discussed the global stability of the $k$-dimensional Ricker model
$$x_{n+1} =x_ne^{r-x_{n-k}}+h,\; r,h>0$$
and the rational difference equation
$$
x_{n+1}=F(x_n,\ldots,x_{n-k+1})=\frac{1+\sum_{j=0}^{k-1}a_{j+1}x_{n-j}}{1+\sum_{j=0}^{k-1}b_{j+1}x_{n-j}},$$
where the initial conditions and the coefficients are all nonnegative real numbers. Additionally, constraints have been imposed on the coefficients to ensure monotonicity in each argument. In the given case of the Ricker model, sufficient conditions that ensure global stability were derived. Finding necessary and sufficient conditions for global stability for the general case is an open problem. The obtained sufficient conditions surprisingly do not rely on the delay value $k$. This finding is of paramount significance since it challenges the prevailing notion that delay has a destabilizing effect.  Indeed, the observed effect on local stability seems to diminish as the magnitude of the delay increases.
  In the example of a rational equation, we established the existence of a global attractor by imposing certain constraints on the present coefficients. This outcome encompasses a broad range of rational difference equations that appear in the literature.
\bigskip

%-----------------------------------
\noindent{\textsc{Acknowledgement:}} The first author is supported by sabbatical leave from the American University of Sharjah and a Maria Zambrano grant for attracting international talent from the Polytechnic University of Cartagena.
\bibliographystyle{unsrt}

\bibliographystyle{plain[8pt]}

\end{document}